\documentclass[final]{paper}

\setlength{\textwidth}{150mm} 
\setlength{\oddsidemargin}{10mm} 
\setlength{\topmargin}{0mm} 
\setlength{\textheight}{210mm} 
\parindent=0pt

% choose options
                            % when including figure files
\usepackage[bottom]{footmisc}% places footnotes at page bottom

\usepackage{enumitem,centernot,comment}
\usepackage{epic,eepic}
\usepackage{graphicx,color}
\usepackage[color,notref,notcite]{showkeys}
\usepackage{graphicx}
\usepackage{url}
\usepackage{amssymb,amsmath}
\usepackage{colortbl}
\usepackage{multicol}
\usepackage{multirow}
\usepackage{bm}
\usepackage{makeidx}
\usepackage{subfigure}
\usepackage{framed}
\usepackage[most]{tcolorbox}
\usepackage{ifthen}
\usepackage{thmtools,thm-restate}

\parindent=0pt

% etc.
% see the list of further useful packages
% in the Reference Guide, Sects. 2.3, 3.1-3.3

\makeindex             % used for the subject index
\allowdisplaybreaks

\definecolor{labelkey}{rgb}{0.6,0,1}

%%%%%%%%%%%%% Pour les corrections entre versions
\usepackage[normalem]{ulem}
\usepackage{bm,marginnote}
\normalem
\newcounter{corr}
\definecolor{violet}{rgb}{0.580,0.,0.827}
\newcommand{\corr}[3]{\typeout{Warning : a correction remains in page
\thepage}
				\stepcounter{corr}        
				{\color{blue}\ifmmode\text{\,\sout{\ensuremath{#1}}\,}\else\sout{#1}\fi}
        {\color{red}#2}
        {\color{violet} \fbox{\thecorr}#3} \corrbox}

\def\corrbox{\marginnote{\color{red}{\bf CORRBOX}}}

% Uncomment this to see the changes in red, without the comments and the striken out parts
%
%\def\corr#1#2#3{\textcolor{red}{#2}{\cbox}}

\newif\ifcompil\compilfalse

%%%%%%%%%%%%%

%%%%%%%%%%%%% Counter of constants

\newcounter{cst}
\catcode`\@=11
\def\ctel#1{C_{\refstepcounter{cst}\@bsphack
\protected@write\@auxout{}%
           {\string\newlabel{#1}{{\thecst}{\thepage}}}\thecst}}
\catcode`\@=12

\newcommand{\cter}[1]{C_{\ref{#1}}}

%%%%%%%%%%%%% Counter of expressions

\newcounter{cexp}
\catcode`\@=11
\def\terml#1{T_{\refstepcounter{cexp}\@bsphack
\protected@write\@auxout{}%
           {\string\newlabel{#1}{{\thecexp}{\thepage}}}\thecexp}}
\catcode`\@=12

% The following lines because \cter and \termr are incompatible with hyperref
% These commands must be uncommented if hyperref is activated, and commented otherwise
%\renewcommand{\cter}[1]{C}
%\renewcommand{\termr}[1]{T}

% bold mathematical font
\newcommand{\mathbi}[1]{{\boldsymbol #1}}

\newcommand{\eop}{{\unskip\nobreak\hfil\penalty50
           \hskip2em\hbox{}\nobreak\hfil\mbox{\rule{1ex}{1ex} \qquad}
   \parfillskip=0pt
   \finalhyphendemerits=0\par\medskip}}
\newenvironment{proof}[1][]{\noindent {\bf Proof#1. } }{\eop}

%%%%%%%%%%%%%%%%%%%%%%%%%%%%%%%%%%%%%%%%%%%%%%%%%%%%%%%%%%%%%%%%%%%%%%%%%%%

\newtheorem{theorem}{Theorem}[section]
\newtheorem{remark}[theorem]{Remark}

\newtheorem{lemma}[theorem]{Lemma} 
\newtheorem{definition}[theorem]{Definition}

\numberwithin{equation}{section}

% Essaie pour les def sur fond gris, afin de mettre en exergue les
% 4 prop essentielles des grad schemes (necessite le package "framed")
% 
\definecolor{shadecolor}{gray}{0.92}
\definecolor{TFFrameColor}{gray}{0.92}
\definecolor{TFTitleColor}{rgb}{0,0,0}
%\newenvironment{defpropgs}[1]{%
%\refstepcounter{theorem}
%\begin{titled-frame}{\begin{minipage}{.95\linewidth}
%Definition \thedefinition~ (#1)
%\end{minipage}
%}
%}
%{
%\end{titled-frame}
%\medskip
%}
\definecolor{ColourPropGS}{rgb}{.68,.88,.98}

%% Old 'remark'
  
%% New 'remark'
%\definecolor{TFFrameColor}{gray}{0.92}
%\definecolor{TFTitleColor}{rgb}{0,0,0}
%\newenvironment{remark}[1]{%
%%  \leftbar
%\refstepcounter{theorem}
%%~\hfill\begin{minipage}{0.95\linewidth}
%\begin{titled-frame}{%\begin{minipage}{0.95\linewidth}
%\small Remark \theremark~ (\emph{\small #1})
%%\end{minipage}
%}
%%  \begin{remark}[#1]%
%\small  }
%{
%%  \end{remark}%
%\end{titled-frame}
%%  \endleftbar
%%\end{minipage}
%\medskip
%  }

% "Exemple"

%%%%%%%%%%%%%%%%%%%%%%%%%%%%%%%%%%%%%% Mise en page %%%%%%%%%%%%%%%%%%%%%%%%%%%%%%%%%%%%%

\newcommand{\ba}{\begin{array}{llll}   }
\newcommand{\bac}{\begin{array}{c}}
\newcommand{\bari}{\begin{array}{r}}
\newcommand{\ea}{\end{array}}

\newcommand{\ban}{\begin{array}{llll}}
\newcommand{\ean}{\end{array}}

\newcommand{\be}{\begin{equation}}
\newcommand{\ee}{\end{equation}}

\newcommand{\beqsys }{\beqtab \left \{ \begin{array}{l}}
\newcommand{\eeqsys }{\end{array} \right . \eeqtab }

\newcommand{\benum}{\begin{enumerate}}
\newcommand{\eenum}{\end{enumerate}}

\newcommand{\beqtab}{\begin{eqnarray}} 
\newcommand{\eeqtab}{\end{eqnarray}}

\newcommand{\dsp}{\displaystyle}

%%%%%%%%%%%%%%%%%%%%%%%%%%%%%%%%%%%%%% Mise en page %%%%%%%%%%%%%%%%%%%%%%%%%%%%%%%%%%%%%

\newcommand{\basex}{\xi}        %base de X_{\disc}

\newcommand{\bfa}{\mathbi{a}}
\newcommand{\bfA}{\mathbi{A}}

\newcommand{\bfn}{\mathbi{n}}

\newcommand{\bv}{\mathbi{v}}

\newcommand{\bF}{\mathbi{F}}

\newcommand{\bvarphi}{\mathbi{\varphi}}
\newcommand{\bpsi}{\mathbi{\psi}}
\newcommand{\bu}{\ubarre}
\newcommand{\bw}{\mathbi{w}}

                         % centre des mediatrices   
 % ieme coordonnee du centre de gravite de l'interface

                   % adherence control volume

\renewcommand{\d}{{\rm d}}

  % distance du centre de cv voisin a l'arete
                % dist entre les 2 centres de mailles voisins
\newcommand{\dfrontiere}{\d\gamma}

\newcommand{\disc}{{\mathcal D}}

\newcommand{\dr}{\partial}

\renewcommand{\div}{{\mathop{\rm div}}}

              % ensemble des aretes

 % ensemble des aretes ext
 % ensemble des aretes int
\newcommand{\eps}{\varepsilon}

\newcommand{\grad}{\nabla}

\newcommand{\half}{{\frac 1 2}}

\newcommand{\Idisc}[1][]{I_{\disc_{#1}}}

\newcommand{\tr}{{\gamma}}
\newcommand{\trrec}{\mathbb{T}} %trace reconstruite

 % for vector-valued Lebesgue spaces
% usage: specify p, q, optional superscript to space for dimension in square brackets
 % for vector-valued Lebesgue/Sobolev spaces
% usage: specify p,k,q, where p is time, k, q are Sobolev parameters.
 % for vector-valued Lebesgue/Sobolev spaces,
% usage: as above. This is for compactly supported Sobolev spaces.

\newcommand{\Wdivpprime}{W_{\! \div}^{p'}}
\newcommand{\Wdivpprimepart}{W_{\! \!  \div,\partial}^{p'}}
\newcommand{\Wdivpprimezero}{W_{\! \!  \div,0}^{p'}}

\newcommand{\mnn}{{m \in \N}}

\newcommand{\wunp}{W_\agrad}
\newcommand{\wunpdense}{\widetilde{W}_\agrad}
\newcommand{\lp}{L}
\newcommand{\lppzero}{V}
\newcommand{\lpd}{\mathbi{\lp}}
\newcommand{\hdiv}{\mathbi{W}_\adiv}
\newcommand{\hdivdense}{\widetilde{\mathbi{W}}_\adiv}
\newcommand{\biu}{{\mathbi{u}}}
\newcommand{\adiv}{\textsc{D}}
\newcommand{\agrad}{\textsc{G}}
\newcommand{\api}{\textsc{P}}

\newcommand{\N}{\mathbb N}

  %unit normal vector to \edge, outward to \cv

  %unit normal vector to \edgep, outward to \cv

\newcommand{\norm}[2]{\| #1 \|_{#2}}

\newcommand{\ntr}{{\gamma_{\mathbf{n}}}}

\renewcommand{\O}{\Omega}

\renewcommand{\phi}{\varphi}

\newcommand{\R}{\mathbb R}

\newcommand{\snorm}[2]{\left\vert #1 \right\vert_{#2}}

\newcommand{\ubarre}{{\overline u}}

\newcommand{\vbarre}{{\overline v}}

\newcommand{\x}{\mathbi{x}}

 %centre de gravite de l'interface

\def\argmin{\mathop{\,\rm argmin\;}}

\renewcommand{\norm}[2]{\left\Vert#1\right\Vert_{#2}}

%\newcommand{\Ib}{{I_{\partial\O}}}

%\newcommand{\bcdisc}{{\widetilde{\disc}}}

% To denote Pi_D(X_D), range of Pi_D with various optional arguments.
% Usage:
%			"\RPiD" (without argument) to produce $\api_D(X_{D,0})$
%			"\RPiD[a][,b]" to produce $\api_{a}(X_{a,b})$
\usepackage{xparse}
\DeclareDocumentCommand{\RPiD}{ O{\disc} O{,0} }{\api_{#1}(X_{#1#2})}
\DeclareDocumentCommand{\RPiDm}{ O{\disc_m} O{,0} }{\api_{#1}(X_{#1#2})}

% Characterisitc function

% Spaces F(I,R), F(I,J,R)
%\def\Fdof#1{{\mathcal{F}(#1,\R)}}
%\def\Fdof#1{{\bm{\mathcal{F}}(#1,\R)}}
%\makeatletter
%\def\Fdof{\@ifnextchar[{\@with}{\@without}}
%\def\@with[#1]#2{{\bm{\mathcal{F}}(#2;#1)}}
%\def\@without#1{{\bm{\mathcal{F}}(#1,\R)}}
%\makeatother

% essential supremum

%\input gadef.tex

\includecomment{commentforus}
%\excludecomment{commentforus}

\begin{document}

\title{A unified analysis of elliptic problems with various boundary conditions and their approximation}

\author{J. Droniou, R. Eymard, T. Gallou\"et and R. Herbin}

\maketitle

\abstract{We design an abstract setting for the approximation in Banach spaces of operators acting in duality. 
A typical example are the gradient and divergence operators in Lebesgue--Sobolev spaces on a bounded domain. 
We apply this abstract setting to the numerical approximation of Leray-Lions type problems, which include in particular linear diffusion. 
The main interest of the abstract setting is to provide a unified convergence analysis that simultaneously covers 
\begin{itemize}
	\item[(i)] all usual boundary conditions, 
	\item[(ii)] several approximation methods. 
%	\item[(iii)] steady and transient cases can be analysed in the framework.
\end{itemize}
The considered approximations can be conforming, or not (that is, the approximation functions can belong to the energy space of the problem, or not), and include classical as well as recent numerical schemes. Convergence results and error estimates are given. 
We finally briefly show how the abstract setting can also be applied to other models, including flows in fractured medium, elasticity equations and diffusion equations on manifolds.

A by-product of the analysis is an apparently novel result on the equivalence between general Poincar\'e inequalities and the surjectivity of the divergence operator in appropriate spaces.
}

\section{Introduction}\label{sec:intro}

We are interested in the approximation of linear and non-linear elliptic with various boundary conditions. 

Numerical schemes for the approximation of nonlinear diffusion problems of Leray-Lions type on standard meshes have already been studied proposed and studied in the past. 
Finite elements were proposed  for the particular case of the $p$-Laplace problem \cite{barrett4,barrett5,barrett6,barrett7}  as well as for quasi-linear problems and  models of Non-Newtonian models in glaciology \cite{glo-03-app,barrett8}. 
More recently, non conforming numerical schemes defined on polytopal meshes were introduced; discrete duality finite volume schemes  were studied in \cite{andreianov1,andreianov2,andreianov3,andreianov4}.
Other schemes which have been showed to be part of the gradient discretisation method reviewed in the recent book \cite{koala}, were also studied for the Leray-Lions type problems,  namely the SUSHI scheme \cite{popo}, the mixed finite volume scheme \cite{dro-06-ll}, the mimetic finite difference method \cite{ant-15-mim}; the  discontinuous Galerkin approximation was  considered in \cite{bur-08-dis,eym-17-dis} and the hybrid high order scheme in \cite{dip-17-hyb}.
In all these works, usually only one type of boundary conditions is considered (most often homogeneous Dirichlet boundary conditions).
These schemes have been shown to be part of the GDM framework in \cite[Part III]{koala}, to the convergence analysis of \cite[Part II]{koala} holds for each of them. 
However, the analysis performed therein is done for each type of boundary conditions (Dirichlet, Neumann, Fourier). 
Our aim  here is to provide a unified formulation of the continuous and discrete problems that covers all boundary conditions; this formulation is based on some abstract Banach spaces in which both the continuous and approximate problems are posed.

This paper is organised as follows. 
The next section is devoted to an illustrative example, which shows how to build the abstract spaces and operators in order to express a variety of problems with a variety of boundary conditions. 
In Section \ref{sec:abstract.cont}, we provide the detailed framework concerning the function spaces, and the core properties of the Gradient Discretisation Method. 
In Section \ref{sec:absleraylions}, we apply this framework to the approximation of an abstract Leray-Lions problem, and we prove the convergence of the approximation methods. 
Then we turn in Section \ref{sec:linell} to the approximation of a linear elliptic problem, deduced from the abstract Leray-Lions problem, with $p=2$. 
Note that in this case the framework becomes Hilbertian. 
Finally, in Section \ref{sec:otherapp}, we briefly review a series of applications of the unified discretisation setting.

\section{An illustrative example}\label{sec:illust}
In this section,  we take $p\in (1,+\infty)$ and define $p'\in (1,+\infty)$ by $1/p+1/p' = 1$, and consider
an archetypal example of elliptic problems, that is the anisotropic $p$-Laplace problem, which reads:
\be
  -\div (\Lambda |\grad\ubarre|^{p-2}\grad\ubarre) = r + \div\bF \hbox{ in } \Omega,
  \label{pbintro}\ee
where
\begin{subequations}
\begin{align}
  \bullet~ & \O \mbox{ is an open bounded connected subset of $\R^d$ ($d\in\N^\star$) with boundary } \partial\Omega, \label{hypomegaintro} \\
\bullet~ & \Lambda \hbox{ is a measurable function from } \Omega  \hbox{ to the set of $d\times d$ symmetric matrices, }\nonumber\\
&\mbox{and there exists $\underline{\lambda},\overline{\lambda}>0$ such that, for a.e.\ $\x\in\O$, $\Lambda(\x)$ has eigenvalues in $[\underline{\lambda},\overline{\lambda}]$,}
\label{hyplambdaintro}\\
\bullet~  & r \in L^{p'}(\O)\hbox{ and }\bF \in L^{p'}(\O)^d. \label{hypfgintro}
\end{align}
\label{hypgnlintro}
\end{subequations}
This problem can be considered with a variety of boundary conditions (BCS), with an additional condition on $\ubarre$ in the case of Neumann boundary conditions. 
These conditions are summarised in Table \ref{tab:bcs}, in which $\bfn$ denotes the outer normal to $\dr\O$.

\begin{table}[htb]
\resizebox{\textwidth}{!}{
\begin{tabular}{|c|c|c|c|c|}
 \hline
  & \begin{tabular}{c}homogeneous\\Dirichlet\end{tabular} & \begin{tabular}{c}homogeneous\\Neumann \end{tabular}& \begin{tabular}{c}non-homogeneous\\ Neumann \end{tabular} & Fourier \\
 \hline
 on $\dr\O$ & $\ubarre=0$ &  $\ba(\Lambda|\grad\ubarre|^{p-2}\grad\ubarre+\bF)\cdot\bfn\\
 =0\ea$ & $\ba(\Lambda|\grad\ubarre|^{p-2}\grad\ubarre+\bF)\cdot\bfn\\ =g\ea$  & $\ba(\Lambda|\grad\ubarre|^{p-2}\grad\ubarre+\bF)\cdot\bfn \\
 + b |\ubarre|^{p-2}\ubarre =g\ea$ \\
 \hline
 \multirow{2}{*}{
\begin{tabular}{c}
additional \\conditions\end{tabular}
} &  &  $\int_\O r(\x)\d\x = 0$  & \begin{tabular}{c}$g\in L^{p'}(\dr\O)$ \\
 $\ba \int_\O r(\x)\d\x \\ + \int_{\dr\O} g(\x)\dfrontiere(\x)=0\ea$\end{tabular} \raisebox{-2em}{\rule{0em}{4.3em}}&  \begin{tabular}{c}$g\in L^{p'}(\dr\O)$ \\ $b\in L^\infty(\dr\O)$ 
 \\ $0 < \underline{b} \le b(\x)$ \end{tabular}\\
 \cline{2-5}
&  &  $\int_\O \ubarre(\x)\d\x = 0$ \raisebox{-.5em}{\rule{0em}{1.6em}} &  $\int_\O \ubarre(\x)\d\x = 0$ &  \\
 \hline
\end{tabular}
}
\caption{Various boundary conditions for \eqref{pbintro}.}
\label{tab:bcs}
\end{table}

The analysis of approximations of \eqref{pbintro} can then be carried out, for each of these boundary conditions; a usual way is to first write a weak formulation of the problem and then design tools to approximate this formulation. 
For non-homogeneous Neumann BCs and Fourier BCs, these tools must include the approximation of the trace on the boundary. 
Let us now describe a unified formulation of \eqref{pbintro}  that includes all considered boundary conditions, together with a generic approximation scheme based on this unified formulation. 

\medskip

Introduce two Banach spaces  $\lpd = L^p(\O)^d$ and $\lp$, a space $\wunp\subset \lp$ (which is dense in $\lp$), an operator $\agrad:\wunp\to\lpd$, two mappings $\bfa:\lp\times\lpd\to\lpd'$ and $a:\lp\to \lp'$ and a right-hand-side $f\in \lp'$ as in Table \ref{tab:operators}. 
Here and in the rest of the paper, $\tr u$ is the trace on $\dr\O$ of any function $u\in W^{1,p}(\O)$.

\begin{table}[htb]
\resizebox{\textwidth}{!}{
\begin{tabular}{|c|c|c|c|c|}
 \hline
  & \begin{tabular}{c}homogeneous\\Dirichlet\end{tabular} & \begin{tabular}{c}homogeneous\\Neumann \end{tabular}& \begin{tabular}{c}non-homogeneous\\ Neumann \end{tabular} & Fourier \\
 \hline
 $\lp=$ & $L^p(\O)$ &  $L^p(\O)$ & $L^p(\O)\times L^p(\dr\O)$ & $L^p(\O)\times L^p(\dr\O)$ \\
 \hline
 $\wunp=$ & $W^{1,p}_0(\O)$ &  $W^{1,p}(\O)$ & $ \{(u,\tr u)\,:\,u\in W^{1,p}(\O) \}$  & $\{(u,\tr u)\,:\,u\in W^{1,p}(\O) \}$  \\
 \hline
 $\agrad:$ & $u\mapsto\nabla u$ &  $u\mapsto\nabla u$ & $ (u,w)\mapsto\nabla u$  & $(u,w)\mapsto\nabla u$ \\
 \hline
 $\bfa:$ & $\ba (u,\bv)\mapsto\\ \quad
 \Lambda|\bv|^{p-2} \bv\ea$ &  $\ba (u,\bv)\mapsto\\ \quad
 \Lambda|\bv|^{p-2} \bv\ea$ & $\ba ((u,w),\bv)\mapsto\\ \quad
 \Lambda|\bv|^{p-2} \bv\ea$  & $\ba ((u,w),\bv)\mapsto\\ \quad
 \Lambda|\bv|^{p-2} \bv\ea$ \\
 \hline
 $a:$ & $u\mapsto 0$ &  $\ba u\mapsto \\
 |\int_\O u|^{p-2}(\int_\O u) 1_\Omega \ea$ \raisebox{-1.2em}{\rule{0em}{2.3em}}& $\ba (u,w)\mapsto\\
 |\int_\O u|^{p-2}(\int_\O u) (1_\Omega,0)\ea$  & $\ba(u,w)\mapsto\\
 (0,b |\tr u|^{p-2}\tr u)\ea$ \\
 \hline
 $f=$ & $r$ &  $r$ & $(r,g)$  & $(r,g)$ \\
 \hline
\end{tabular}
}
\caption{Abstract operators for various boundary conditions.}
\label{tab:operators}
\end{table}

The weak formulation of Problem \eqref{pbintro} with all considered BCs is then:
\be
\begin{aligned}
&\mbox{Find $\ubarre\in\wunp$  such that, }\forall v\in\wunp,\\
&\langle\bfa(\ubarre,\agrad \ubarre),\agrad v \rangle_{\lpd',\lpd} + \langle a(\ubarre),v \rangle_{\lp',\lp}  = \langle f,v \rangle_{\lp',\lp} -\langle \bF,\agrad v \rangle_{\lpd',\lpd}.
\end{aligned}
\label{eq:weakintro}\ee

Indeed, in the case of homogeneous Dirichlet boundary conditions, $\norm{\nabla\cdot}{L^p(\O)^d}$ is a norm on the space $\wunp = W^{1,p}_0(\O)$ (owing to Poincar\'e's inequality) and there is no need for an additional condition: we can then let $a=0$. 

In the case of homogeneous Neumann conditions, multiplying \eqref{pbintro} by $v = 1_\O$ and integrating over $\O$ shows that the condition  
$\int_\O r(\x)\d\x = 0$ is necessary for the existence of at least one solution; this solution
is defined up to an additive constant which is fixed by imposing, for example, $\int_\O \ubarre(\x)\d\x = 0$. A classical technique to write a weak formulation that embeds this condition, and has
the required coercivity property, is to introduce an additional term $\langle a(\ubarre),v \rangle_{\lp',\lp}$ in the left-hand side of this formulation, where $ a(\ubarre) = |\int_\O u|^{p-2}\int_\O u 1_\Omega $.
Non-homogeneous Neumann BCs are handled in a similar way.

For Fourier boundary conditions, the term $\langle a(\ubarre),v \rangle_{\lp',\lp} = \int_{\dr\O} b |\tr u|^{p-2}\tr u \tr v \dfrontiere$ naturally appears when multiplying \eqref{pbintro} by a test function $v$ and
formally integrating by parts.

\medskip

Problem \eqref{eq:weakintro} can be re-formulated by introducing a space $\hdiv\subset \lpd'$ and the dual operator $\adiv:\hdiv\to \lp'$ to $\agrad$ as per Table \ref{tab:dual}.
In this table, we set $\Wdivpprime(\O) = \{\bvarphi\in L^{p'}(\O)^d\,:\, \div\bvarphi\in L^{p'}(\O)\}$ $\Wdivpprimezero(\O)=\{\bvarphi\in \Wdivpprime(\O)\,:\,\ntr\bvarphi=0\}$ and $\Wdivpprimepart(\O) = \{\bvarphi\in \Wdivpprime(\O)\,:\, \ntr\bvarphi\in L^{p'}(\partial\O)\}$, where $\ntr\bvarphi$ is the normal trace of $\bv$ on $\partial\O$.
The space $\hdiv$ and operator $\adiv$ are defined such that the following formula, which generalises the Stokes formula to all type of boundary conditions, holds:
\begin{equation}
 \forall u\in\wunp,\ \forall \bv\in\hdiv,\ \langle \bv,\agrad u\rangle_{{\lpd}',\lpd} + \langle \adiv\bv,u\rangle_{\lp',\lp} = 0.\label{abs:stokes-formulaintro}
\end{equation}

\begin{table}[!h]
\begin{center}
\begin{tabular}{|c|c|c|c|c|}
 \hline
  & \begin{tabular}{c}homogeneous\\Dirichlet\end{tabular} & \begin{tabular}{c}homogeneous\\Neumann \end{tabular}& \begin{tabular}{c}non-homogeneous\\ Neumann \end{tabular} & Fourier \\
 \hline
 $\hdiv =$ & $\Wdivpprime(\O)$ &  $\Wdivpprimezero(\O)$ & $ \Wdivpprimepart(\O)$  & $\Wdivpprimepart(\O)$ \\
 \hline
 $\adiv:$ & $\bv\mapsto\div \bv$ &  $\bv\mapsto\div \bv$ & $\bv\mapsto(\div\bv,-\ntr\bv)$  & $\bv\mapsto(\div\bv,-\ntr\bv)$ \\
 \hline
\end{tabular}
\caption{Dual space and operators for various boundary conditions.}
\label{tab:dual}
\end{center}
\end{table}

 Problem \eqref{eq:weakintro}  is then equivalent to
\be
	\begin{aligned}
		&\mbox{Find $\ubarre\in\wunp$  such that $\bfa(\ubarre,\agrad \ubarre) +\bF\in\hdiv$ and } - \adiv \big(\bfa(\ubarre,\agrad \ubarre)+\bF\big) + a(\ubarre) = f \mbox{ in } \lp'.
	\label{eq:pbstrongintro} 
\end{aligned}
\ee

This equivalence is proved in Section \ref{sec:absleraylions} in the general abstract setting.
Thanks to the above introduced framework, approximations of Problem \eqref{eq:weakintro} can be designed by drawing inspiration from the Gradient Discretisation Method (GDM), see \cite{koala}. 
Three discrete objects $\disc=(X_\disc,\api_\disc,\agrad_\disc)$, forming altogether a \emph{gradient discretisation}, are introduced: a finite dimensional vector space $X_{\disc}$ meant to contain the families of discrete unknowns, a linear mapping $\api_\disc~:~X_{\disc}\to \lp$ that reconstructs an element in $\lp$ from an element of $X_{\disc}$, and a ``gradient'' reconstruction $\agrad_\disc~:~X_{\disc}\to \lpd$, which is a linear mapping that reconstructs an element in $\lpd$ from an element of $X_{\disc}$. 
The \emph{gradient scheme} for the approximation of Problem \eqref{eq:weakintro} is then obtained by replacing the continuous space and operators by the discrete ones:
\be
\begin{aligned}
&\mbox{Find $u\in X_\disc$ such that, } \forall v\in X_\disc\,,\\
&\langle\bfa(\api_\disc u, \agrad_\disc u),\agrad_\disc v \rangle_{\lpd',\lpd} + \langle a(\api_\disc u ),\api_\disc v \rangle_{\lp',\lp}   = \langle f,\api_\disc v \rangle_{\lp',\lp} -\langle \bF,\agrad_\disc v \rangle_{\lpd',\lpd}.
\end{aligned}
\label{gradsch_genintro}\ee

Note that $P_\disc$ denotes either a reconstructed function over $\O$ (Dirichlet or homogeneous Neumann) conditions, or a pair reconstructed function on $\Omega$ and reconstructed  trace on $\partial \Omega$ (homogeneous Neumann and Fourier conditions, see Table \ref{tab:PiDtrrec}).

\begin{table}[!h]
\begin{center}
\begin{tabular}{|c|c|c|c|c|}
 \hline
  & \begin{tabular}{c}homogeneous\\Dirichlet\end{tabular} & \begin{tabular}{c}homogeneous\\Neumann \end{tabular}& \begin{tabular}{c}non-homogeneous\\ Neumann \end{tabular} & Fourier \\
 \hline
 $\api_\disc:$ & $u\mapsto\Pi_\disc u$ &  $u\mapsto\Pi_\disc u$ & $u\mapsto(\Pi_\disc u,\trrec_\disc u)$  & $u\mapsto(\Pi_\disc u,\trrec_\disc u)$ \\
 \hline
\end{tabular}
\end{center}
\caption{Function ($\Pi_\disc$) and trace ($\trrec_\disc$) reconstructions for various boundary conditions.}
\label{tab:PiDtrrec}
\end{table}

%The aim of this paper is twofold. First, the general properties on the abstract spaces and operators, are established, which enable the analysis of Problem \eqref{eq:weakintro},\eqref{eq:pbstrongintro}. 
%Then, considering the framework of the Gradient Discretisation Method for the approximation of these problems, general properties are provided on the gradient discretisations that ensure the convergence of the corresponding gradient schemes. 
%The analysis is therefore simultaneously carried out for all considered boundary conditions. Examples are also provided to show that this abstract GDM framework encompasses a significant number of standard approximations.

We now generalise this process from the continuous problems to the discrete ones in the remaining part of this paper.

\section{Continuous and discrete settings}\label{sec:abstract.cont}

The examples in Section \ref{sec:illust} gave a flavour of a general setting we now describe.

\subsection{Continuous spaces and operators}\label{sec:cont.setting}

Let $\lp$ and $\lpd$ be separable reflexive Banach spaces, with respective topological dual spaces  $\lp'$ and $\lpd'$. 
Let $\wunp\subset\lp$ be a dense subspace of $\lp$ and let $\agrad:\wunp\to \lpd$ be a linear operator whose graph $\mathcal{G} = \{ (u,\agrad u), u\in \wunp\}$ is closed in $\lp\times\lpd$. 
As a consequence, $\wunp$ endowed with the graph norm $\norm{u}{\wunp,\mathcal{G}} = (\norm{u}{\lp}^p + \norm{\agrad u}{\lpd}^p)^{1/p}$ is a Banach space continuously embedded in $\lp$.  
Since $\lp\times\lpd$ is separable, $\wunp$ is also separable for the norm $\norm{\cdot}{\wunp,\mathcal{G}}$ (see  \cite[Ch. III]{brezis}).

\medskip

Define $\hdiv$ by:
\be
\hdiv = \{ \bv\in \lpd'\,:\, \exists w\in\lp', \forall u\in\wunp, \langle \bv,\agrad u\rangle_{{\lpd}',\lpd} + \langle w,u\rangle_{\lp',\lp} = 0\}.
\label{abs:defhdiv}\ee
The density of $\wunp$ in $\lp$ implies (and is actually equivalent to) the following property.
\be\label{wunp.prop}
\mbox{For all $w\in\lp'$, }(\forall u\in\wunp,  \langle w,u\rangle_{{\lp}',\lp} = 0)\Rightarrow w = 0.
\ee
Therefore, for any $\bv\in\hdiv$, the element $w\in\lp'$ whose existence is assumed in \eqref{abs:defhdiv} is unique; this defines a linear operator $\adiv:\hdiv\to\lp'$, adjoint operator of $-\agrad$ in the sense of \cite[p.167]{Kato1995}, such that $w = \adiv \bv$, that is,
\begin{equation}
 \forall u\in\wunp,\ \forall \bv\in\hdiv,\ \langle \bv,\agrad u\rangle_{{\lpd}',\lpd} + \langle \adiv\bv,u\rangle_{\lp',\lp} = 0.\label{abs:stokes-formula}
\end{equation}
It easily follows from this that the graph of $\adiv$ is closed in $\lpd'\times \lp'$, and therefore that, endowed with the graph norm $\norm{\bv}{\hdiv} = (\norm{\bv}{{\lpd}'}^{p'} + \norm{\adiv\bv}{\lp'}^{p'})^{1/p'}$, $\hdiv$ is a Banach space continuously embedded and dense in $\lpd'$ (see \cite[Theorem 5.29 p.168]{Kato1995}).

\begin{remark}[Reverse construction of the dual operators]\label{rem:reverse}
Since the spaces $\lp$ and $\lpd$ are reflexive, \cite[Theorem 5.29 p.168]{Kato1995} also states that there holds
\begin{align*}
 &\wunp = \{ u\in \lp\,:\, \exists \biu\in\lpd, \forall \bv\in\hdiv, \langle \bv,\biu\rangle_{{\lpd}',\lpd} + \langle \adiv\bv,u\rangle_{\lp',\lp} = 0\},\\
&\forall u\in\wunp\,,\; \agrad u=\mbox{the element $\biu\in\lpd$ in the definition of $\wunp$}.
\end{align*}
It is therefore equivalent to begin with  the construction of $(\wunp,\agrad)$ or that of $(\hdiv,\adiv)$.
\end{remark}

Let $\lppzero$ be a closed subspace of $\lp'$ and denote by $\snorm{\cdot}{\lp,\lppzero}$  the semi-norm on $\lp$ defined by
\be
\forall u\in \lp, \ \snorm{u}{\lp,\lppzero} = \left\{\begin{array}{cl}
\dsp \sup_{\mu\in \lppzero\setminus\{0\} } \frac {|\langle \mu, u\rangle_{\lp',\lp}|} {\norm{\mu}{\lp'}}&\mbox{ if $\lppzero\not=\{0\}$},\\[1em]
0&\mbox{ if $\lppzero=\{0\}$.}
\end{array}
\right.
\label{abs:defseminorm}\ee
By construction, there holds, for all $u\in\lp$, $\snorm{u}{\lp,\lppzero} \le \sup_{\mu\in \lp'\setminus\{0\} } \frac {|\langle \mu, u\rangle_{\lp',\lp}|} {\norm{\mu}{\lp'}}=\norm{u}{\lp}$.
Defining, for $u\in \wunp$,
\be
\norm{u}{\wunp} = (\snorm{u}{\lp,\lppzero}^p + \norm{\agrad u}{\lpd}^p)^{1/p},
\label{abs:defnormwunp}\ee
we therefore have
\be\label{un.sens}
\forall u\in \wunp\,,\quad\norm{u}{\wunp}\le \norm{u}{\wunp, \mathcal{G}}.
\ee
In the following, we assume that these semi-norms and norms are actually equivalent,
that is, there exists $C_{\wunp,\lppzero}>0$ such that
\be
\forall u\in\wunp, \quad\norm{u}{\wunp, \mathcal{G}}\le  C_{\wunp,\lppzero} \norm{u}{\wunp}.
\label{eq:hypV}\ee

A necessary and sufficient condition on $V$ for \eqref{eq:hypV} to hold is that $\lp'={\rm Im}(\adiv)+\lppzero$ as stated in the following theorem, whose proof is based on a Galerkin-type method which enters the gradient discretisation method framework introduced in Section \ref{sec:abstract.cont} applied to the abstract Leray-Lions problem of Section \ref{sec:absleraylions}.

\begin{theorem}\label{abs:thm:normeq}
Assume the setting described in Section \ref{sec:cont.setting},
except \eqref{eq:hypV}.
Then
\be
\lp'={\rm Im}(\adiv)+\lppzero
\label{abs:suppl}\ee
if and only if $ \norm{\cdot}{\wunp} $ and $\norm{\cdot}{\wunp,\mathcal{G}} $  are two equivalent norms (that is, \eqref{eq:hypV} holds).
\end{theorem}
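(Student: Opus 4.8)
The plan is to first recast the norm equivalence \eqref{eq:hypV} as a generalised Poincaré inequality, and then to prove the two implications separately: the forward direction by an open mapping argument, and the backward direction by combining an easy density statement with a solvability result for an auxiliary abstract Leray--Lions problem. Since the terms $\norm{\agrad u}{\lpd}$ appear on both sides of \eqref{eq:hypV}, that estimate is equivalent to the existence of $C>0$ with $\norm{u}{\lp}\le C(\snorm{u}{\lp,\lppzero}+\norm{\agrad u}{\lpd})$ for all $u\in\wunp$, and I will prove that \eqref{abs:suppl} holds if and only if this inequality holds. Throughout I use that, in any Banach space, $\norm{u}{\lp}=\sup_{\norm{f}{\lp'}\le1}|\langle f,u\rangle_{\lp',\lp}|$, and that $\snorm{u}{\lp,\lppzero}$ is precisely the norm of $u$ tested against the functionals of $\lppzero$.

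Assume first \eqref{abs:suppl}. Consider the bounded linear map $S:\hdiv\times\lppzero\to\lp'$, $S(\bv,\mu)=\adiv\bv+\mu$, which is surjective by \eqref{abs:suppl}. Since $\hdiv$ and $\lppzero$ are Banach spaces, the open mapping theorem provides $C_0>0$ such that every $f\in\lp'$ can be written $f=\adiv\bv+\mu$ with $\norm{\bv}{\hdiv}+\norm{\mu}{\lp'}\le C_0\norm{f}{\lp'}$. For $u\in\wunp$ and $\norm{f}{\lp'}\le 1$, the Stokes formula \eqref{abs:stokes-formula} gives $\langle f,u\rangle=\langle\adiv\bv,u\rangle+\langle\mu,u\rangle=-\langle\bv,\agrad u\rangle+\langle\mu,u\rangle$, whence $|\langle f,u\rangle|\le\norm{\bv}{\lpd'}\norm{\agrad u}{\lpd}+\norm{\mu}{\lp'}\snorm{u}{\lp,\lppzero}\le C_0(\norm{\agrad u}{\lpd}+\snorm{u}{\lp,\lppzero})$, using $\norm{\bv}{\lpd'}\le\norm{\bv}{\hdiv}\le C_0$. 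Taking the supremum over $f$ yields the Poincaré inequality, hence \eqref{eq:hypV}.

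Conversely, assume \eqref{eq:hypV} and set $Z={\rm Im}(\adiv)+\lppzero$. I first show that $Z$ is dense by computing its annihilator in $\lp''=\lp$ (reflexivity), using that annihilators turn sums into intersections. One has $\lppzero^\perp=\{u:\snorm{u}{\lp,\lppzero}=0\}$, while $u\in({\rm Im}(\adiv))^\perp$ means $\langle\adiv\bv,u\rangle=0$ for all $\bv\in\hdiv$, which by the reverse characterisation of $\wunp$ in Remark \ref{rem:reverse} (applied with $\biu=0$) is equivalent to $u\in\wunp$ and $\agrad u=0$. Thus $Z^\perp=\{u\in\wunp:\agrad u=0,\ \snorm{u}{\lp,\lppzero}=0\}=\{u:\norm{u}{\wunp}=0\}$, which by \eqref{eq:hypV} reduces to $\{0\}$, so $Z$ is dense in $\lp'$. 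The main obstacle is to upgrade density to the equality $Z=\lp'$, that is, to the closedness of $Z$; this does not follow from a soft separation argument, because the graph norm of $\hdiv$ mixes $\bv$ and $\adiv\bv$, so the adjoint of $S$ cannot be controlled by $\norm{\agrad u}{\lpd}$ alone. I would therefore obtain surjectivity constructively: given $f\in\lp'$, I solve the abstract Leray--Lions problem of Section \ref{sec:absleraylions}, whose coercivity is supplied precisely by \eqref{eq:hypV}, choosing the zero-order term so that it lands in $\lppzero$; the resulting solution produces $\bv\in\hdiv$ and $\mu\in\lppzero$ with $\adiv\bv+\mu=f$, giving $f\in Z$. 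This is the Galerkin/gradient-discretisation route announced before the statement, and the delicate point is exactly the well-posedness of that auxiliary problem, which is where the inequality \eqref{eq:hypV} does its essential work.
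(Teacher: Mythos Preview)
Your proposal is correct and, in the backward direction, follows essentially the paper's route: given $f\in\lp'$ you solve the abstract Leray--Lions problem \eqref{eq:pbstrong} (with $\bF=0$, $\bfa$ and $a$ as in Lemmas~\ref{lem:plapgen}--\ref{lem:agen}, the coercivity coming from \eqref{eq:hypV}), and read off $f=\adiv(-\bfa(\agrad\bu))+a(\bu)\in{\rm Im}(\adiv)+\lppzero$. Your preliminary annihilator computation showing $Z={\rm Im}(\adiv)+\lppzero$ is dense is correct (the identification $({\rm Im}\,\adiv)^\perp=\ker\agrad$ via Remark~\ref{rem:reverse} is fine), but, as you yourself note, it is not needed: the constructive solvability step already gives $Z=\lp'$ outright.

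In the forward direction your argument differs from the paper's. You invoke the open mapping theorem on $S:\hdiv\times\lppzero\to\lp'$, $(\bv,\mu)\mapsto\adiv\bv+\mu$, to obtain a \emph{quantitative} decomposition $f=\adiv\bv+\mu$ with $\norm{\bv}{\hdiv}+\norm{\mu}{\lp'}\le C_0\norm{f}{\lp'}$; the Poincar\'e inequality then follows with the explicit constant $C_0$. The paper instead fixes $f\in\lp'$, picks \emph{some} decomposition $f=\adiv\bv_f+\mu_f$ (no control on the size), bounds $\sup_{\norm{u}{\wunp}=1}|\langle f,u\rangle|$ by $\norm{\bv_f}{\lpd'}+\norm{\mu_f}{\lp'}$, and then applies the Banach--Steinhaus theorem to conclude that $\{u:\norm{u}{\wunp}=1\}$ is bounded in $\lp$. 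Both are short and standard; yours yields a Poincar\'e constant directly tied to the openness constant of $S$, while the paper's avoids ever needing a bounded right inverse for $S$.
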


The proof is given in  Section \ref{sec:absleraylions}.

\begin{remark}[Poincar\'e inequalities] 
In the particular context of Sobolev spaces, Theorem \ref{abs:thm:normeq} proves that there is equivalence between the so-called ``mean'' Poincar\'e--Wirtinger inequality and the surjectivity of the divergence operator. 
\end{remark}

\subsection{Gradient discretisations}

Based on the previous definitions, we generalise the concept of gradient discretisation of \cite{koala} and the key notions of coercivity, limit-conformity, consistency and compactness to the present abstract setting.
These properties enable us, in Section \ref{sec:absleraylions}, to design converging approximation schemes for an abstract monotonous problem. 

\subsubsection{Key definitions}

\begin{definition}[Gradient Discretisation]
\label{def:graddisc}
In the setting described in Section \ref{sec:cont.setting}, a gradient discretisation is defined by $\disc = (X_{\disc},\api_\disc,\agrad_\disc)$, where:
\begin{enumerate}
\item The set of discrete unknowns 
$X_{\disc}$ is a finite dimensional vector space on $\R$.
\item The ``function'' reconstruction $\api_\disc~:~X_{\disc}\to \lp$ is a linear mapping that reconstructs, from an element of $X_{\disc}$, an element in $\lp$.
\item The ``gradient'' reconstruction $\agrad_\disc~:~X_{\disc}\to \lpd$ is a linear mapping that reconstructs, from an ele\-ment of $X_{\disc}$, an element of $\lpd$.
\item The mappings $\api_\disc$ and $\agrad_\disc$ are such that the following quantity is a norm on $X_\disc$:
\[
\norm{v}{\disc} := \left(\snorm{\api_\disc v}{\lp,\lppzero}^p + \norm{\agrad_\disc v}{\lpd}^p\right)^{1/p}.
\]
\end{enumerate}
\end{definition}

\begin{definition}[Coercivity] \label{abs:def-coer}
If $\disc$ is a gradient discretisation in the sense of Definition \ref{def:graddisc}, let $C_\disc$ be the norm of $\api_\disc$:
\be
C_\disc =  \max_{v\in X_{\disc}\setminus\{0\}}\frac {\norm{\api_\disc v}{\lp }} {\Vert v \Vert_{\disc}}.
\label{abs:defcoercivity}\ee 
A sequence $(\disc_m)_{m\in\N}$ of gradient discretisations is \textbf{coercive} if  there exists $ C_P \in \R_+$ such that $C_{\disc_m}\le C_P$ for all $\mnn$.
\end{definition}

\begin{definition}[Limit-conformity] \label{abs:def-limconf}
If $\disc$ is a gradient discretisation in the sense of Definition \ref{def:graddisc}, 
let $W_{\disc}:\hdiv \to [0,+\infty)$ be given by
\be
\forall \bvarphi\in \hdiv \,,\;
W_{\disc}(\bvarphi) = \sup_{u\in X_{\disc}\setminus\{0\}}\frac{\dsp
\left|\langle \bvarphi,\agrad_\disc u\rangle_{{\lpd}',\lpd} + \langle \adiv\bvarphi,\api_\disc u\rangle_{\lp',\lp}
\right|}{\Vert  u \Vert_{\disc}}.
\label{abs:defwdisc}\ee
A sequence $(\disc_m)_{m\in\N}$ of gradient discretisations is
\textbf{limit-conforming} if
\be
\forall \bvarphi\in \hdiv\,,\ \lim_{m\to\infty} W_{\disc_m}(\bvarphi) = 0.
\label{abs:limconf}\ee
\end{definition}

Once $\lp$, $\lpd$, $\hdiv$ and $\adiv$ are chosen, the definition \ref{abs:def-limconf} of limit-conformity is constrained by the continuous duality formula \eqref{abs:stokes-formula}; as a consequence of Lemma \ref{abs:lcsuf-coer} below, the definition of coercivity is also constrained by this formula. 
These two notions therefore naturally follow from the continuous setting.
On the contrary, the following two definitions of consistency and compactness are disconnected from the duality formula. 
Various choices for these notions are possible, we describe here one that is in particular adapted to the monotonous problem in Section \ref{sec:absleraylions}.

\begin{definition}[Consistency] \label{abs:def-cons}
If $\disc$ is a gradient discretisation in the sense of Definition \ref{def:graddisc}, let
 $S_{\disc}:\wunp \to [0,+\infty)$ be given by
\be
\forall \varphi\in \wunp \,,\quad
S_{\disc}(\varphi) = \min_{v\in X_{\disc}}\Bigl(\norm{\api_\disc v - \varphi}{\lp } + \norm{\agrad_\disc v-\agrad\varphi}{\lpd}\Bigr).
\label{abs:defsdisc}\ee
A sequence $(\disc_m)_{m\in\N}$ of gradient discretisations
is \textbf{consistent} if
\be
\forall \varphi\in \wunp\,,\ \lim_{m\to\infty} S_{\disc_m}(\varphi)=0.
\label{abs:strongconsist}
\ee
\end{definition}

\begin{definition}[Compactness] \label{abs:def-comp}
A sequence $(\disc_m)_{m\in\N}$ of  gradient discretisations in the sense of Defi\-nition \ref{def:graddisc}
is  \textbf{compact} if, for any sequence $u_m\in X_{\disc_m}$ such that $(\Vert  u_m \Vert_{\disc_m})_{m\in\N}$ is bounded, the sequence $(\api_{\disc_m}u_m)_{m\in\N}$  is relatively compact in $\lp $. 
\end{definition}

\subsubsection{Main properties}
 
 The following result uses the surjectivity of the divergence operator proven in Theorem \ref{abs:thm:normeq}.

\begin{lemma}[Limit-conformity implies coercivity]\label{abs:lcsuf-coer}
If a sequence of gradient discretisations is limit-conforming in the sense of Definition \ref{abs:def-limconf}, then it is also coercive in the sense of Definition \ref{abs:def-coer}.
\end{lemma}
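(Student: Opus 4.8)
The plan is to argue by contradiction and let the uniform boundedness principle do the work of producing a \emph{uniform} bound, the crucial input being the surjectivity decomposition $\lp'={\rm Im}(\adiv)+\lppzero$ provided by Theorem \ref{abs:thm:normeq} (available since the full setting of Section \ref{sec:cont.setting}, including \eqref{eq:hypV}, is assumed). Suppose the sequence $(\disc_m)_\mnn$ is limit-conforming but not coercive. Then $\limsup_m C_{\disc_m}=+\infty$, so along a subsequence (still indexed by $m$, along which limit-conformity persists) we have $C_{\disc_m}\to+\infty$. Since $X_{\disc_m}$ is finite dimensional, I may pick $v_m\in X_{\disc_m}$ attaining the maximum in \eqref{abs:defcoercivity}, normalised so that $\norm{v_m}{\disc_m}=1$, whence $\norm{\api_{\disc_m}v_m}{\lp}=C_{\disc_m}\to+\infty$. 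Viewing each $\api_{\disc_m}v_m$ as a continuous linear functional on $\lp'$ via the canonical isometric embedding $\lp\hookrightarrow(\lp')'$, I will reach a contradiction by proving that this family of functionals is pointwise bounded on $\lp'$, hence uniformly bounded by Banach--Steinhaus.

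To get pointwise boundedness, I fix $\mu\in\lp'$ and use \eqref{abs:suppl} to write $\mu=\adiv\bvarphi+\nu$ with $\bvarphi\in\hdiv$ and $\nu\in\lppzero$. For the $\adiv\bvarphi$ part I rewrite
$$\langle \adiv\bvarphi,\api_{\disc_m}v_m\rangle_{\lp',\lp}
=\Big(\langle \bvarphi,\agrad_{\disc_m}v_m\rangle_{{\lpd}',\lpd}+\langle \adiv\bvarphi,\api_{\disc_m}v_m\rangle_{\lp',\lp}\Big)
-\langle \bvarphi,\agrad_{\disc_m}v_m\rangle_{{\lpd}',\lpd}.$$
The bracketed quantity is bounded in modulus by $W_{\disc_m}(\bvarphi)\norm{v_m}{\disc_m}=W_{\disc_m}(\bvarphi)\to0$, by definition \eqref{abs:defwdisc} and limit-conformity, while $|\langle \bvarphi,\agrad_{\disc_m}v_m\rangle_{{\lpd}',\lpd}|\le\norm{\bvarphi}{{\lpd}'}\norm{\agrad_{\disc_m}v_m}{\lpd}\le\norm{\bvarphi}{{\lpd}'}$ because $\norm{v_m}{\disc_m}=1$. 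For the $\nu$ part, definition \eqref{abs:defseminorm} gives $|\langle \nu,\api_{\disc_m}v_m\rangle_{\lp',\lp}|\le\norm{\nu}{\lp'}\snorm{\api_{\disc_m}v_m}{\lp,\lppzero}\le\norm{\nu}{\lp'}$. Adding the two contributions shows that $(\langle \mu,\api_{\disc_m}v_m\rangle_{\lp',\lp})_m$ is bounded for every fixed $\mu\in\lp'$.

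Banach--Steinhaus on the Banach space $\lp'$ then yields $\sup_m\norm{\api_{\disc_m}v_m}{\lp}<+\infty$, contradicting $\norm{\api_{\disc_m}v_m}{\lp}=C_{\disc_m}\to+\infty$; hence $(\disc_m)_\mnn$ is coercive. The main obstacle, and the reason a direct estimate will not suffice, is the uniformity in $m$: bounding $\norm{\api_\disc v}{\lp}$ directly would require controlling $W_{\disc_m}(\bvarphi)$ uniformly over the family of $\bvarphi$'s arising from decomposing the unit ball of $\lp'$, whereas limit-conformity only gives decay for each \emph{fixed} $\bvarphi$ with no rate. Passing the uniformity through Banach--Steinhaus—converting boundedness for each individual $\mu$ into a uniform bound on the functional norms—is exactly what sidesteps this difficulty, and the isometry of the canonical embedding is what lets that uniform operator bound be read back as a bound on $\norm{\api_{\disc_m}v_m}{\lp}$.
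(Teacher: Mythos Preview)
Your proof is correct and follows essentially the same route as the paper: decompose an arbitrary $\mu\in\lp'$ via Theorem \ref{abs:thm:normeq} as $\adiv\bvarphi+\nu$, control the $\adiv\bvarphi$-contribution through $W_{\disc_m}(\bvarphi)$ and $\norm{\bvarphi}{\lpd'}$, control the $\nu$-contribution through the seminorm $\snorm{\cdot}{\lp,\lppzero}$, and then invoke Banach--Steinhaus. The only cosmetic difference is that the paper proceeds directly, applying Banach--Steinhaus to the full set $E=\{\api_{\disc_m}v/\norm{v}{\disc_m}:m\in\N,\ v\in X_{\disc_m}\setminus\{0\}\}$ rather than arguing by contradiction along a subsequence of maximisers; your contradiction wrapper is unnecessary but harmless.
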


\begin{proof} Consider a limit-conforming sequence $(\disc_m)_{\mnn}$ and set 
\[
E=\left\{\frac{\api_{\disc_m}v}{\norm{v}{\disc_m}}\in \lp \,:\,\mnn\,,\;v\in X_{\disc_m}\backslash\{0\}\right\}.
\] 
Proving the coercivity of $(\disc_m)_\mnn$ consists in proving that
$E$ is bounded in $\lp $. Let $f\in \lp'$. By Theorem \ref{abs:thm:normeq},
there exists $\bv_f\in\hdiv$ and $\mu_f\in \lppzero$ such that $f=\adiv \bv_f+\mu_f$.
The definition of $\snorm{\cdot}{\lp,\lppzero}$ shows that $|\langle \mu_f, \cdot\rangle_{\lp',\lp}|\le \norm{\mu_f}{\lp'}\snorm{\cdot}{\lp,\lppzero}$.
For $z\in E$, take $\mnn$ and $v\in X_{\disc_m}\backslash\{0\}$ such that
$z= \frac{\api_{\disc_m}v}{\norm{v}{\disc_m}}$ and write
\begin{align}
|\langle f,z\rangle_{\lp',\lp}|\le{}&
\frac{1}{\norm{v}{\disc_m}}\left|\langle \adiv\bv_f, \api_{\disc_m} v\rangle_{\lp',\lp}\right| 
+\frac{1}{\norm{v}{\disc_m}}\left|\langle \mu_f, \api_{\disc_m} v\rangle_{\lp',\lp}\right| \nonumber\\
\le{}&  \frac{1}{\norm{v}{\disc_m}}\left|\langle \adiv\bv_f,\api_{\disc_m}v\rangle_{\lp',\lp}+
\langle \bv_f,\agrad_{\disc_m}v\rangle_{{\lpd}',\lpd}\right|
+\frac{1}{ \norm{v}{\disc_m} }\left|\langle \bv_f,\agrad_{\disc_m}v\rangle_{{\lpd}',\lpd}\right|\nonumber\\
&+\frac{1}{\norm{v}{\disc_m}} \norm{\mu_f}{\lp'} \snorm{ \api_{\disc_m} v}{\lp,\lppzero}\nonumber\\
\le{}& W_{\disc_m}(\bv_f)+\norm{\bv_f}{\lpd'} +\norm{\mu_f}{\lp'}.
\label{abs:lc.coer.1}
\end{align}
In the last inequality we used $\snorm{\api_{\disc_m}v}{\lp,\lppzero}\le \norm{v}{\disc_m}$ and $\norm{\agrad_{\disc_m}v}{\lpd}\le
\norm{v}{\disc_m}$.
Since $(\disc_m)_{m\in\N}$ is limit-conforming, $(W_{\disc_m}(\bv_f))_\mnn$ converges to
$0$ and is therefore bounded. Estimate \eqref{abs:lc.coer.1} thus shows that
$\{\langle f,z\rangle_{\lp',\lp}\,:\,z\in E\}$ is bounded by some constant depending on $f$. Since this is valid for any $f\in \lp'$, we infer from the Banach--Steinhaus theorem \cite[Theorem 2.2]{brezis} that $E$ is bounded in $\lp $.
\end{proof}

Checking limit-conformity is made easier by the following result, which
reduces the set of elements $\bvarphi$ on which the convergence in \eqref{abs:limconf} has to be asserted.

\begin{lemma}[Equivalent condition for limit-conformity]\label{abs:suf-limconf}
A sequence $(\disc_m)_{m\in\N}$ of gradient discretisations is limit-conforming in the sense of Definition \ref{abs:def-limconf} if and only if 
it is coercive in the sense of  Definition \ref{abs:def-coer}, and there exists a dense subset $\hdivdense$ of $\hdiv $ such that
\be
\forall \bm{\psi}\in \hdivdense,\ \lim_{m\to\infty} W_{\disc_m}(\bm{\psi}) = 0.
\label{abs:limconfs}\ee
\end{lemma}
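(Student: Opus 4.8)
The plan is to prove the two implications separately; the forward implication is immediate from the previous lemma, while the reverse one rests on a uniform (in $m$) Lipschitz estimate for the functionals $W_{\disc_m}$ combined with a density argument.

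For the ``only if'' direction I would simply note that limit-conformity implies coercivity by Lemma \ref{abs:lcsuf-coer}, and that \eqref{abs:limconfs} then holds with $\hdivdense = \hdiv$ itself, directly from the definition \eqref{abs:limconf} of limit-conformity. This direction therefore requires no computation beyond invoking the earlier result.

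The substance is the ``if'' direction. The first observation is that, for a fixed gradient discretisation $\disc$, the map $\bvarphi\mapsto W_\disc(\bvarphi)$ is a seminorm on $\hdiv$: for each $u\in X_\disc$ the quantity inside the supremum in \eqref{abs:defwdisc} is the absolute value of a linear functional of $\bvarphi$ (both $\bvarphi\mapsto\langle\bvarphi,\agrad_\disc u\rangle_{\lpd',\lpd}$ and $\bvarphi\mapsto\adiv\bvarphi$ are linear), and a supremum of seminorms is a seminorm. In particular $W_\disc$ obeys the triangle inequality, so $|W_\disc(\bvarphi)-W_\disc(\bm{\psi})|\le W_\disc(\bvarphi-\bm{\psi})$. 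The second, and main, step is to bound $W_\disc(\bm{\eta})$ for $\bm{\eta}\in\hdiv$ \emph{uniformly in} $\disc$, using coercivity. For any $u\in X_\disc\setminus\{0\}$, the triangle inequality on the pairings gives $|\langle\bm{\eta},\agrad_\disc u\rangle_{\lpd',\lpd}+\langle\adiv\bm{\eta},\api_\disc u\rangle_{\lp',\lp}|\le \norm{\bm{\eta}}{\lpd'}\norm{\agrad_\disc u}{\lpd}+\norm{\adiv\bm{\eta}}{\lp'}\norm{\api_\disc u}{\lp}$. By definition of $\norm{\cdot}{\disc}$ one has $\norm{\agrad_\disc u}{\lpd}\le\norm{u}{\disc}$, while coercivity yields $\norm{\api_\disc u}{\lp}\le C_\disc\norm{u}{\disc}\le C_P\norm{u}{\disc}$. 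Dividing by $\norm{u}{\disc}$ and taking the supremum gives $W_\disc(\bm{\eta})\le \norm{\bm{\eta}}{\lpd'}+C_P\norm{\adiv\bm{\eta}}{\lp'}$, which is at most $C\norm{\bm{\eta}}{\hdiv}$ for a constant $C$ depending only on $C_P$ and $p'$ (by equivalence of the $\ell^1$ and $\ell^{p'}$ norms on $\R^2$). Crucially $C$ does not depend on $m$.

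The conclusion is then the standard density argument. Given $\bvarphi\in\hdiv$ and $\eps>0$, I would choose $\bm{\psi}\in\hdivdense$ with $\norm{\bvarphi-\bm{\psi}}{\hdiv}\le\eps$; combining the triangle inequality with the uniform bound gives $W_{\disc_m}(\bvarphi)\le W_{\disc_m}(\bm{\psi})+C\eps$, so that, letting $m\to\infty$ and using \eqref{abs:limconfs}, $\limsup_m W_{\disc_m}(\bvarphi)\le C\eps$. Since $\eps$ is arbitrary, $W_{\disc_m}(\bvarphi)\to 0$, which is limit-conformity. I expect the only delicate point to be the uniform-in-$m$ control of the Lipschitz constant $C$, which is precisely where coercivity is indispensable: it is what allows $\norm{\api_{\disc_m}u}{\lp}$ to be replaced by a multiple of $\norm{u}{\disc_m}$. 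The remaining ingredients — the seminorm property and the density passage to the limit — are routine.
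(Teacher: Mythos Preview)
Your proposal is correct and follows essentially the same route as the paper: the forward direction is dispatched via Lemma~\ref{abs:lcsuf-coer}, and the reverse direction bounds $W_{\disc_m}(\bvarphi)\le W_{\disc_m}(\bm{\psi})+C\,\norm{\bvarphi-\bm{\psi}}{\hdiv}$ uniformly in $m$ using coercivity, then concludes by density. The only cosmetic difference is that you package the triangle inequality step as the seminorm property of $W_\disc$, whereas the paper inserts $\bm{\psi}$ directly into the definition \eqref{abs:defwdisc}; the resulting constant is $\max(1,C_P)$ in the paper versus your $C$ depending on $C_P$ and $p'$, which is immaterial.
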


\begin{proof} If  $(\disc_m)_{m\in\N}$ is limit-conforming,
then it is coercive by Lemma \ref{abs:lcsuf-coer}, and
\eqref{abs:limconfs} is satisfied with $\hdivdense=\hdiv$ (this is \eqref{abs:limconf}).

Conversely, assume that $(\disc_m)_{\mnn}$ is coercive and that \eqref{abs:limconfs}
holds. Let $C_P \in \R_+$ be an upper bound of $(C_{\disc_m})_\mnn$. 
To prove \eqref{abs:limconf}, let $\bvarphi \in \hdiv$,
$\eps>0$ and take $\bm{\psi}\in \hdivdense$ such that $\norm{\bvarphi - \bm{\psi}}{\hdiv } \le \eps$.
By definition of the norm in $\hdiv$, this means that 
$\norm{\bvarphi - \bm{\psi}}{\lpd'} +\norm{\adiv\bvarphi - \adiv\bm{\psi}}{\lp'} \le \eps$.
Hence, for any $u\in X_{\disc_m}\backslash\{0\}$,
\begin{multline*}
\frac{\left|\langle \bvarphi-\bm{\psi},\agrad_{\disc_m} u\rangle_{{\lpd}',\lpd} + \langle \adiv\bvarphi-
\adiv\bm{\psi},\api_{\disc_m} u\rangle_{\lp',\lp}
\right|}{\norm{u}{\disc_m}}\\
\le \norm{\bvarphi-\bm{\psi}}{\lpd'}\frac{\norm{\agrad_{\disc_m} u}{\lpd}}{\norm{u}{\disc_m}}
+\norm{\adiv\bvarphi-\adiv\bm{\psi}}{\lp'}\frac{\norm{\api_{\disc_m} u}{\lp}}{\norm{u}{\disc_m}}
\le \max(1,C_P)\eps.
\end{multline*}
Introducing $\bm{\psi}$ and $\adiv\bm{\psi}$ in the definition \eqref{abs:defwdisc} of
$W_{\disc_m}(\bvarphi)$, we infer
\begin{align*}
W_{\disc_m}(\bvarphi)\le{}& \sup_{u\in X_{\disc_m}\backslash\{0\}}
\frac{|\langle \bm{\psi},\agrad_{\disc_m} u\rangle_{\lpd',\lpd}+
\langle \adiv\bpsi,\api_{\disc_m} u\rangle_{\lp',\lp}|}{\norm{u}{\disc_m}}
+ \max(1,C_P)\eps\\
={}&W_{\disc_m}(\bm{\psi})+ \max(1,C_P)\eps.
\end{align*}
Invoking \eqref{abs:limconfs} we deduce that
$\limsup_{m\to\infty}  W_{\disc_m}(\bvarphi)\le  \max(1,C_P)\eps$, and the
proof is concluded by letting $\eps\to 0$. \end{proof}

The next lemma is an essential tool to use compactness techniques in the convergence
analysis of approximation methods for non-linear models.

\begin{lemma}[Regularity of the limit]\label{abs:lem:reghunzero}
Let $(\disc_m)_{m\in\N}$ be a limit-conforming sequence of gradient discretisations, in the sense of 
Defi\-nition \ref{abs:def-limconf}. For any $\mnn$, take $u_m\in X_{\disc_m}$ and assume that $(\norm{u_m}{\disc_m})_{m\in\N}$ is bounded. Then
there exists $u\in \wunp $ such that, along a subsequence as $m\to\infty$,
$(\api_{\disc_m}u_m)_\mnn$ converges weakly in $\lp $ to $u$, and
$(\agrad_{\disc_m}u_m)_\mnn$ converges weakly in $\lpd$ to $\agrad u$.
\end{lemma}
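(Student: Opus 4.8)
The plan is to obtain the limit $u$ by weak compactness, using the reflexivity of $\lp$ and $\lpd$, and then to identify the weak limit of the discrete gradients as the true gradient $\agrad u$ by passing to the limit in the discrete duality pairing, which is controlled by limit-conformity. The identification relies crucially on the reverse characterisation of $(\wunp,\agrad)$ recorded in Remark~\ref{rem:reverse}.

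First I would establish the required boundedness. From the definition of $\norm{\cdot}{\disc_m}$ in Definition~\ref{def:graddisc} we have $\norm{\agrad_{\disc_m} u_m}{\lpd}\le \norm{u_m}{\disc_m}$; and since a limit-conforming sequence is coercive by Lemma~\ref{abs:lcsuf-coer}, there exists $C_P$ with $\norm{\api_{\disc_m} u_m}{\lp}\le C_P\norm{u_m}{\disc_m}$. As $(\norm{u_m}{\disc_m})_\mnn$ is bounded, both sequences $(\api_{\disc_m} u_m)_\mnn$ and $(\agrad_{\disc_m} u_m)_\mnn$ are bounded in the reflexive separable spaces $\lp$ and $\lpd$. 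Extracting a subsequence, I then obtain $u\in\lp$ and $\bG\in\lpd$ such that $\api_{\disc_m} u_m\rightharpoonup u$ weakly in $\lp$ and $\agrad_{\disc_m} u_m\rightharpoonup \bG$ weakly in $\lpd$.

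The crux is to prove that $u\in\wunp$ and $\bG=\agrad u$. Fix $\bvarphi\in\hdiv$. The definition~\eqref{abs:defwdisc} of $W_{\disc_m}$ yields (trivially if $u_m=0$)
\[
\left|\langle \bvarphi,\agrad_{\disc_m} u_m\rangle_{{\lpd}',\lpd} + \langle \adiv\bvarphi,\api_{\disc_m} u_m\rangle_{\lp',\lp}\right|\le W_{\disc_m}(\bvarphi)\,\norm{u_m}{\disc_m},
\]
and the right-hand side tends to $0$, by limit-conformity and the boundedness of $(\norm{u_m}{\disc_m})_\mnn$. Since $\bvarphi\in\lpd'$ and $\adiv\bvarphi\in\lp'$ are fixed, the two weak convergences let me pass to the limit in the left-hand side, giving $\langle\bvarphi,\bG\rangle_{{\lpd}',\lpd}+\langle\adiv\bvarphi,u\rangle_{\lp',\lp}=0$. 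As this holds for every $\bvarphi\in\hdiv$, the reverse characterisation of Remark~\ref{rem:reverse} (valid because $\lp$ and $\lpd$ are reflexive) forces $u\in\wunp$ with $\agrad u=\bG$, which is exactly the assertion.

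The main obstacle is precisely this identification step: confirming that the weak limit $\bG$ of the discrete gradients coincides with the genuine gradient $\agrad u$ of the weak limit $u$, rather than being merely some element of $\lpd$. This is where limit-conformity does all the work, being the discrete analogue of the Stokes formula~\eqref{abs:stokes-formula}; combined with reflexivity, which underpins the reverse construction of Remark~\ref{rem:reverse}, it forces the limit pair $(u,\bG)$ to satisfy the defining relation of the graph of $\agrad$. I emphasise that no strong compactness is invoked here: weak compactness from reflexivity suffices, and the compactness property of Definition~\ref{abs:def-comp} is not needed for this lemma.
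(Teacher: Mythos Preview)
Your proof is correct and follows essentially the same route as the paper: use Lemma~\ref{abs:lcsuf-coer} to get boundedness of $(\api_{\disc_m}u_m)_\mnn$, extract weak limits by reflexivity, pass to the limit in the $W_{\disc_m}$-controlled duality pairing, and conclude via the reverse characterisation of Remark~\ref{rem:reverse}. The only cosmetic difference is that the paper invokes this characterisation implicitly (``This relation simultaneously proves that $u\in\wunp$ and that $\biu=\agrad u$'') rather than citing Remark~\ref{rem:reverse} by name.
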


\begin{proof} By definition of $\norm{\cdot}{\disc_m}$, $(\agrad_{\disc_m}u_m)_\mnn$ is bounded in $\lpd$.
By Lemma \ref{abs:lcsuf-coer}, $(\disc_m)_\mnn$ is coercive and therefore
$(\api_{\disc_m} u_m)_{m\in\N}$ is bounded in $\lp$. 
The reflexivity of $\lp$ and $\lpd$ thus gives a subsequence
of $(\disc_m,u_m)_{m\in\N}$, denoted in the same way, and elements $u\in \lp $ and $\mathbi{u}\in  \lpd$ such that $(\api_{\disc_m}u_m)_\mnn$ converges weakly in $\lp$ to $u$ 
and $(\agrad_{\disc_m}u_m)_\mnn$ converges weakly in $\lpd$ to $\mathbi{u}$.
These weak convergences, the limit-conformity of $(\disc_m)_{m\in\N}$ and the boundedness of $(\norm{u_m}{\disc_m})_\mnn$ 
enable us to identify the limit in \eqref{abs:limconf} to see that
\[
\forall \bvarphi\in \hdiv\,,\; \langle \bvarphi,\mathbi{u}\rangle_{{\lpd}',\lpd} + \langle \adiv\bvarphi,u\rangle_{\lp',\lp} = 0.
\]
This relation simultaneously proves that $u\in \wunp $ and that $\mathbi{u} = \agrad u$. \end{proof}

\medskip

The following is the equivalent of Lemma \ref{abs:suf-limconf} for the notion
of consistency.

\begin{lemma}[Equivalent condition for consistency]\label{abs:suf-cst}
A sequence $(\disc_m)_{m\in\N}$ of gradient discretisations is consistent in the sense of Definition \ref{abs:def-cons}
if and only if there exists a dense subset $\wunpdense$ of $\wunp $ such that
\be
  \forall \psi\in \wunpdense\,,\ \lim_{m\to\infty} S_{\disc_m}(\psi)=0.
   \label{abs:strongconsistr}
\ee
\end{lemma}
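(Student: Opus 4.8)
The plan is to follow the template of the proof of Lemma \ref{abs:suf-limconf}: the ``only if'' direction is immediate, since one may take $\wunpdense=\wunp$ in \eqref{abs:strongconsist} so that \eqref{abs:strongconsistr} holds trivially. All the work therefore lies in the converse, and the crux is that the map $\varphi\mapsto S_{\disc}(\varphi)$ is Lipschitz continuous on $\wunp$ for the graph norm $\norm{\cdot}{\wunp,\mathcal{G}}$, with a Lipschitz constant that does \emph{not} depend on $\disc$. This uniformity is precisely what allows the density of $\wunpdense$ to be combined with the assumed convergence \eqref{abs:strongconsistr} on $\wunpdense$.

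First I would establish this uniform Lipschitz estimate. Fix a gradient discretisation $\disc$ and $\varphi,\psi\in\wunp$, and set $C(\varphi,\psi)=\norm{\psi-\varphi}{\lp}+\norm{\agrad(\psi-\varphi)}{\lpd}$. For every $v\in X_\disc$, the triangle inequality and linearity of $\agrad$ give
\[
S_{\disc}(\varphi)\le \norm{\api_\disc v - \varphi}{\lp} + \norm{\agrad_\disc v-\agrad\varphi}{\lpd}
\le \Bigl(\norm{\api_\disc v - \psi}{\lp} + \norm{\agrad_\disc v-\agrad\psi}{\lpd}\Bigr) + C(\varphi,\psi).
\]
The left-most member is independent of $v$, while the parenthesised bracket has infimum $S_{\disc}(\psi)$ over $v\in X_\disc$ (attained, since $X_\disc$ is finite-dimensional); minimising over $v$ thus yields $S_{\disc}(\varphi)\le S_{\disc}(\psi)+C(\varphi,\psi)$. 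Exchanging the roles of $\varphi$ and $\psi$ and using $a+b\le 2(a^p+b^p)^{1/p}$ we obtain
\[
|S_{\disc}(\varphi)-S_{\disc}(\psi)|\le \norm{\psi-\varphi}{\lp}+\norm{\agrad(\psi-\varphi)}{\lpd}\le 2\norm{\psi-\varphi}{\wunp,\mathcal{G}},
\]
and the essential point is that this bound is independent of $\disc$.

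Then I would conclude by a density argument. Assume \eqref{abs:strongconsistr} for a dense subset $\wunpdense$ of $\wunp$, and fix $\varphi\in\wunp$ and $\eps>0$. By density there is $\psi\in\wunpdense$ with $\norm{\psi-\varphi}{\wunp,\mathcal{G}}\le\eps$, and the uniform estimate above gives $S_{\disc_m}(\varphi)\le S_{\disc_m}(\psi)+2\eps$ for every $m$. Taking $\limsup_{m\to\infty}$ and invoking \eqref{abs:strongconsistr} at $\psi$, so that $S_{\disc_m}(\psi)\to 0$, we get $\limsup_{m\to\infty}S_{\disc_m}(\varphi)\le 2\eps$; letting $\eps\to 0$ proves $\lim_{m\to\infty}S_{\disc_m}(\varphi)=0$, which is consistency. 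The argument is essentially routine; the only subtlety—rather than a genuine obstacle—is to read the density of $\wunpdense$ for the graph norm $\norm{\cdot}{\wunp,\mathcal{G}}$, so that $\norm{\psi-\varphi}{\lp}$ and $\norm{\agrad(\psi-\varphi)}{\lpd}$ are controlled simultaneously. This is legitimate because $\wunp$ is equipped with that norm (equivalently, by \eqref{eq:hypV}, with $\norm{\cdot}{\wunp}$).
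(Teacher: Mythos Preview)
Your proof is correct and follows essentially the same route as the paper's: both show the Lipschitz-type estimate $S_{\disc_m}(\varphi)\le S_{\disc_m}(\psi)+C\norm{\varphi-\psi}{}$ (with a constant independent of $m$) via the triangle inequality, and then conclude by density and $\limsup$. The only cosmetic difference is that you work with the graph norm $\norm{\cdot}{\wunp,\mathcal G}$ directly (yielding the constant $2$), whereas the paper works with the equivalent norm $\norm{\cdot}{\wunp}$ and invokes the continuous embedding $\wunp\hookrightarrow\lp$ to control $\norm{\psi-\varphi}{\lp}$, producing the constant $C_{\wunp}+1$.
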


\begin{proof} Let us assume that \eqref{abs:strongconsistr} holds and let us prove \eqref{abs:strongconsist}
(the converse is straightforward, take $\wunpdense=\wunp $).
Observe first that, since $\wunp$ is continuously embedded in $\lp$, there exists $C_{\wunp}>0$ such that
\[
 \forall \varphi\in\wunp\,, \ \norm{\varphi}{\lp}\le C_{\wunp} \norm{\varphi}{\wunp}.
\]
Let $\varphi\in \wunp $. Take $\eps>0$ and $\psi\in \wunpdense$ such that $\Vert \varphi - \psi\Vert_{\wunp } \le \eps$. For $v\in X_{\disc_m}$, the triangle inequality and the definition \eqref{abs:defnormwunp} of the norm in $\wunp$ yield
\begin{align*}
\norm{\api_{\disc_m} v-\varphi}{\lp }+\norm{\agrad_{\disc_m} v-\agrad\varphi}{\lpd}
\le{}& \norm{\api_{\disc_m} v-\psi}{\lp }+\norm{\psi-\varphi}{\lp }+\norm{\agrad_{\disc_m} v-\agrad\psi}{\lpd}+ \norm{\agrad\psi-\agrad\varphi}{\lpd}\\
\le{}& \norm{\api_{\disc_m} v-\psi}{\lp }+\norm{\agrad_{\disc_m} v-\agrad\psi}{\lpd}+
(C_{\wunp}+1)\norm{\psi-\varphi}{\wunp}.
\end{align*}
Taking the infimum over $v\in X_{\disc_m}$ leads to 
$S_{\disc_m}(\varphi)\le S_{\disc_m}(\psi)+(C_{\wunp}+1)\eps$. Assumption \eqref{abs:strongconsistr} then shows that
$\limsup_{m\to\infty} S_{\disc_m}(\varphi)\le (C_{\wunp}+1)\eps$, and letting
$\eps\to 0$ concludes the proof that $S_{\disc_m}(\varphi)\to 0$ as $m\to\infty$.
\end{proof}

The next result shows that the compactness, as the limit-conformity, is stronger than the coercivity.

\begin{lemma}[Compactness implies coercivity]\label{abs:suf-coer}
If a sequence of gradient discretisations is compact in the sense of Defi\-nition \ref{abs:def-comp},
then it is coercive in the sense of Definition \ref{abs:def-coer}.
\end{lemma}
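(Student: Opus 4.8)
The plan is to argue by contradiction, reducing coercivity to the elementary fact that a relatively compact subset of a Banach space is bounded. First I would note that, for each fixed $m$, the quantity $C_{\disc_m}$ in \eqref{abs:defcoercivity} is precisely the operator norm of $\api_{\disc_m}$ seen as a map from $(X_{\disc_m},\norm{\cdot}{\disc_m})$ into $\lp$; since $X_{\disc_m}$ is finite dimensional and $\norm{\cdot}{\disc_m}$ is a genuine norm (item 4 of Definition \ref{def:graddisc}), the maximum defining $C_{\disc_m}$ is attained on the unit sphere. Coercivity of $(\disc_m)_\mnn$ is exactly the boundedness of the real sequence $(C_{\disc_m})_\mnn$, so I would assume for contradiction that this sequence is unbounded.

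From unboundedness I would extract a subsequence $(\disc_{m_k})_{k\in\N}$ along which $C_{\disc_{m_k}}\to+\infty$, and for each $k$ pick a maximiser $v_k\in X_{\disc_{m_k}}\setminus\{0\}$, normalised so that $\norm{v_k}{\disc_{m_k}}=1$ and $\norm{\api_{\disc_{m_k}}v_k}{\lp}=C_{\disc_{m_k}}$. The compactness hypothesis in Definition \ref{abs:def-comp} is phrased for a sequence carrying exactly one term per index $m$, so to feed it these $v_k$ I would pad with zeros: define $u_m\in X_{\disc_m}$ by $u_{m_k}=v_k$ and $u_m=0$ for every $m$ outside the extracted subsequence. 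Then $\norm{u_m}{\disc_m}\le 1$ for all $\mnn$, so $(\norm{u_m}{\disc_m})_\mnn$ is bounded and Definition \ref{abs:def-comp} applies.

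Compactness then yields that $(\api_{\disc_m}u_m)_\mnn$ is relatively compact, hence bounded, in $\lp$. This contradicts $\norm{\api_{\disc_{m_k}}u_{m_k}}{\lp}=C_{\disc_{m_k}}\to+\infty$, and the contradiction forces $(C_{\disc_m})_\mnn$ to be bounded, i.e.\ $(\disc_m)_\mnn$ to be coercive in the sense of Definition \ref{abs:def-coer}.

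The only genuinely delicate point I anticipate is the mismatch between the logical forms of the hypothesis and the conclusion: non-coercivity yields only a subsequence along which the norms blow up, whereas Definition \ref{abs:def-comp} quantifies over full sequences $(u_m)_\mnn$. The zero-padding device resolves this cleanly; alternatively, one could first record the (equally short) observation that any subsequence of a compact sequence of gradient discretisations is again compact, and then apply the argument directly to $(\disc_{m_k})_k$. Everything else, namely the attainment of the maximum and the implication that relative compactness gives boundedness, is routine.
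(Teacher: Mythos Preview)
Your proof is correct and follows essentially the same contradiction argument as the paper: assume non-coercivity, extract a subsequence along which the ratio $\norm{\api_{\disc_m}v_m}{\lp}/\norm{v_m}{\disc_m}$ blows up, normalise, and contradict the boundedness forced by relative compactness. You are in fact more explicit than the paper about the subsequence-versus-full-sequence issue (the paper simply writes ``denoted in the same way'' and moves on), but the underlying idea is identical.
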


\begin{proof} Assume that $(\disc_m)_\mnn$ is compact but not coercive. Then there exists a subsequence of $(\disc_m)_{m\in\N}$ (denoted in the same way) such that, for all $m\in\N$, we can find $v_m\in X_{\disc_m}\setminus\{0\}$ satisfying
\[
\lim_{m\to\infty}\frac {\norm{\api_{\disc_m} v_m}{\lp }}{\norm{v_m}{\disc_m}} = +\infty.
\]
Setting $u_m = v_m/\norm{v_m}{\disc_m}$, this gives
$\lim_{m\to\infty} \norm{\api_{\disc_m} u_m}{\lp } = +\infty$. 
But $\norm{u_m}{\disc_m} = 1$ for all $\mnn$ and the compactness of the sequence of gradient discretisations therefore implies that $(\api_{\disc_m}u_m)_{m\in\N}$  is relatively compact in $\lp $, which is a contradiction. \end{proof}

The next two lemmas show that the compactness of $(\disc_m)_\mnn$ is strongly related to some compactness property of $\wunp$.

\begin{lemma}\label{lem:compgd}[Existence of a compact sequence of GDs implies compact embedding of $\wunp$]
 Let us assume the existence of a sequence of gradient discretisations which is consistent in the sense of Definition \ref{abs:def-cons} and compact in the sense of Definition \ref{abs:def-comp}. Then the embedding of $\wunp$ in $\lp$ is compact.
\end{lemma}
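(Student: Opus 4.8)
The plan is to prove directly that every sequence $(w_n)_\nnn$ bounded in $\wunp$ admits a subsequence converging in $\lp$, which is precisely the asserted compactness of the embedding $\wunp\hookrightarrow\lp$. The idea is to approximate each $w_n$ by a discrete function supplied by consistency, and then to feed a diagonally selected sequence of these discrete functions into the compactness hypothesis. The delicate point, and the real crux of the argument, is that consistency produces for each fixed $n$ a family indexed by the discretisation parameter $m$, whereas Definition \ref{abs:def-comp} accepts only a single element $u_m\in X_{\disc_m}$ per discretisation; reconciling the two indices requires a careful diagonal extraction.

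Concretely, I would fix a sequence $(w_n)_\nnn$ with $\norm{w_n}{\wunp}$ bounded. For each pair $(n,m)$, let $v_{n,m}\in X_{\disc_m}$ realise the minimum defining $S_{\disc_m}(w_n)$ in \eqref{abs:defsdisc}, so that $\norm{\api_{\disc_m}v_{n,m}-w_n}{\lp}+\norm{\agrad_{\disc_m}v_{n,m}-\agrad w_n}{\lpd}=S_{\disc_m}(w_n)$. By consistency (Definition \ref{abs:def-cons}), $S_{\disc_m}(w_n)\to 0$ as $m\to\infty$ for each fixed $n$, so one can pick a strictly increasing sequence $(m_n)_\nnn$ with $S_{\disc_{m_n}}(w_n)\le 1/n$. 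Setting $u_n:=v_{n,m_n}\in X_{\disc_{m_n}}$, the reconstruction $\api_{\disc_{m_n}}u_n$ then lies within $1/n$ of $w_n$ in $\lp$.

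It remains to bound $(\norm{u_n}{\disc_{m_n}})_\nnn$ and to invoke compactness. Using that $\snorm{\cdot}{\lp,\lppzero}$ is a seminorm dominated by $\norm{\cdot}{\lp}$, I would estimate $\snorm{\api_{\disc_{m_n}}u_n}{\lp,\lppzero}\le \snorm{w_n}{\lp,\lppzero}+1/n$ and $\norm{\agrad_{\disc_{m_n}}u_n}{\lpd}\le\norm{\agrad w_n}{\lpd}+1/n$, whence $\norm{u_n}{\disc_{m_n}}\le \norm{w_n}{\wunp}+2^{1/p}/n$ is bounded. Since $(\disc_{m_n})_\nnn$ is a subsequence of a compact sequence of gradient discretisations it is itself compact (a routine verification, e.g.\ by padding with zeros at the omitted indices), so $(\api_{\disc_{m_n}}u_n)_\nnn$ is relatively compact in $\lp$; extracting a further subsequence yields $\api_{\disc_{m_n}}u_n\to \ell$ in $\lp$ for some $\ell\in\lp$. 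Combined with $\norm{\api_{\disc_{m_n}}u_n-w_n}{\lp}\le 1/n\to 0$, this forces $w_n\to\ell$ in $\lp$ along that subsequence, which concludes the proof. The only steps needing attention are the diagonal choice of $(m_n)$ and the closure of compactness under passing to a subsequence; the norm estimate and the final limit identification are then immediate.
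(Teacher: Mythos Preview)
Your proof is correct and follows essentially the same diagonal-extraction argument as the paper's: approximate each $w_n$ by a discrete element at a sufficiently large index $m_n$ using consistency, bound the resulting discrete norms via the boundedness of $(w_n)$, and apply the compactness hypothesis to extract a convergent subsequence whose limit must coincide with that of $(w_n)$. You are in fact slightly more careful than the paper on two points---the explicit control of $\norm{u_n}{\disc_{m_n}}$ via $\snorm{\cdot}{\lp,\lppzero}\le\norm{\cdot}{\lp}$, and the observation that compactness of $(\disc_m)_\mnn$ passes to the subsequence $(\disc_{m_n})_\nnn$ by padding with zeros---while the paper leaves both of these implicit.
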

\begin{proof}
Let $(\disc_m)_\mnn$ be a consistent and compact sequence of gradient discretisations,
and $(\overline{u}_m)_{m\in\N}$ be a bounded sequence in $\wunp$. For $m=0$, let $N_0\in\N$ be such that there exists $u_{N_0}\in X_{\disc_{N_0}}$ satisfying 
 \[
  \norm{\api_{\disc_{N_0}} u_{N_0} - \overline{u}_0}{\lp } + \norm{\agrad_{\disc_{N_0}} u_{N_0}-\agrad\overline{u}_0}{\lpd} \le 1.
 \]
We then build the sequence $(N_m)_{m\in\N}$ by induction.
 For any $m\in\N$, let $N_m>N_{m-1}$ such that there is $u_{N_m}\in X_{D_{N_m}}$ satisfying
 \[
  \norm{\api_{\disc_{N_m}} u_{N_m} - \overline{u}_m}{\lp } + \norm{\agrad_{\disc_{N_m}} u_{N_m}-\agrad\overline{u}_m}{\lpd} \le \frac 1 {m+1}.
 \]
 Then the sequence $(\norm{u_{N_m}}{\disc_{N_m}})_\mnn$ is bounded. Using the compactness hypothesis of  $(\disc_m)_\mnn$, there exists a subsequence, denoted
 $(\disc_{N_{\phi(m)}},u_{N_{\phi(m)}})_{\mnn}$ and $\overline{u}\in\lp$ such that $\api_{\disc_{N_{\phi(m)}}} u_{N_{\phi(m)}}$ converges to  $\overline{u}$ in $\lp$. The inequality
 \[
   \norm{\overline{u} - \overline{u}_{\phi(m)}}{\lp }  \le  \frac 1 {\phi(m)+1} + \norm{\api_{\disc_{N_{\phi(m)}}} u_{N_{\phi(m)}} - \overline{u}}{\lp }
 \]
 then shows that the subsequence $(\overline{u}_{\phi(m)})_{\mnn}$ converges to $\overline{u}$ in $\lp$.
\end{proof}

\subsubsection{A generic example of gradient discretisations}

In \cite{koala}, one can find a series of examples of nonconforming GDs, in the setting of the introduction of this paper (for usual 2nd order elliptic problems):
\begin{enumerate}
 \item Non-conforming finite elements,
 \item Discontinuous Galerkin methods,
 \item Hybrid Mimetic and Mixed methods.
\end{enumerate}
Definition \ref{def:galerkingd} below  gives a very simple example of a conforming GD, whose interest is to yield, in this abstract setting, the existence of at least one family of GDs that satisfies all required properties. 
Note that this particular gradient discretisation is the classical Galerkin approximation. 

\begin{definition}[Galerkin gradient discretisation]\label{def:galerkingd}
Let $(u_i)_{i\in\N}$ be a dense sequence in $\wunp$ (whose existence is ensured by the separability of $\wunp$).
For all $m\in\N$, define a conforming Galerkin gradient discretisation $\disc_m= (X_{\disc_m},\api_{\disc_m},\agrad_{\disc_m})$, in the sense of Definition \ref{def:graddisc}, in the following way:
\begin{enumerate}
 \item $X_{\disc_m}$ is the vector space spanned by $(u_i)_{i=0,\ldots,m}$,
 \item for all $u\in X_{\disc_m}$, $\api_{\disc_m}u = u$,
\item for all $u\in X_{\disc_m}$, $\agrad_{\disc_m}u = \agrad u$. 
\end{enumerate}
\end{definition}
\begin{lemma}[Existence of a coercive, consistent and limit-conforming (and compact) sequence of GDs]\label{lem:existgd}
~ The sequence $(\disc_m)_\mnn$ defined by Definition \ref{def:galerkingd} is coercive, limit-conforming and consistent in the sense of the definitions \ref{abs:def-coer}, \ref{abs:def-limconf} and \ref{abs:def-cons}. 
If we moreover assume that the embedding of $\wunp$ in $\lp$ is compact, then  $(\disc_m)_{\mnn}$ is also compact in the sense of Definition \ref{abs:def-comp}.
\end{lemma}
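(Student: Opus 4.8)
The plan is to exploit the \emph{conforming} nature of the Galerkin discretisation: since $\api_{\disc_m}$ is the identity on $X_{\disc_m}\subset\wunp$ and $\agrad_{\disc_m}$ is the restriction of $\agrad$, the discrete objects are genuine restrictions of the continuous ones. The single identity that drives the whole proof is therefore $\norm{v}{\disc_m}=\norm{v}{\wunp}$ for every $v\in X_{\disc_m}$, which follows at once by comparing Definition \ref{def:graddisc} with \eqref{abs:defnormwunp}. First I would record this identity and use the norm equivalence \eqref{eq:hypV} to confirm that $\norm{\cdot}{\disc_m}$ is a bona fide norm on $X_{\disc_m}$, so that each $\disc_m$ is indeed a gradient discretisation in the sense of Definition \ref{def:graddisc}.

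For coercivity I would bound $\norm{\api_{\disc_m}v}{\lp}=\norm{v}{\lp}\le\norm{v}{\wunp,\mathcal{G}}\le C_{\wunp,\lppzero}\norm{v}{\wunp}=C_{\wunp,\lppzero}\norm{v}{\disc_m}$, using that the graph norm dominates $\norm{\cdot}{\lp}$ and then \eqref{eq:hypV}; this gives $C_{\disc_m}\le C_{\wunp,\lppzero}$ uniformly in $m$, hence coercivity in the sense of Definition \ref{abs:def-coer}. (Alternatively, coercivity follows a posteriori from limit-conformity via Lemma \ref{abs:lcsuf-coer}.) Limit-conformity is where the conforming structure pays off most directly: for any $\bvarphi\in\hdiv$ and any $u\in X_{\disc_m}$, the quantity inside the supremum in \eqref{abs:defwdisc} is exactly $\langle\bvarphi,\agrad u\rangle_{\lpd',\lpd}+\langle\adiv\bvarphi,u\rangle_{\lp',\lp}$, which vanishes by the Stokes formula \eqref{abs:stokes-formula}. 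Hence $W_{\disc_m}(\bvarphi)=0$ for every $m$ and every $\bvarphi$, and limit-conformity in the sense of Definition \ref{abs:def-limconf} is immediate, no passage to the limit being needed.

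For consistency I would invoke Lemma \ref{abs:suf-cst} with the dense subset $\wunpdense=\{u_i:i\in\N\}$ furnished by the dense sequence of Definition \ref{def:galerkingd}: for a fixed $u_i$ and any $m\ge i$ one has $u_i\in X_{\disc_m}$, so choosing $v=u_i$ in \eqref{abs:defsdisc} gives $S_{\disc_m}(u_i)=0$, whence \eqref{abs:strongconsistr} holds trivially and consistency in the sense of Definition \ref{abs:def-cons} follows.

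Finally, under the additional hypothesis that $\wunp$ embeds compactly in $\lp$, I would prove compactness directly: if $u_m\in X_{\disc_m}$ with $(\norm{u_m}{\disc_m})_\mnn$ bounded, then $(\norm{u_m}{\wunp})_\mnn$ is bounded by the identity above, so $(\api_{\disc_m}u_m)_\mnn=(u_m)_\mnn$ lies in a bounded subset of $\wunp$ and is therefore relatively compact in $\lp$, yielding compactness in the sense of Definition \ref{abs:def-comp}. I do not expect any genuine obstacle here; the only points demanding a little care are the verification that $\norm{\cdot}{\disc_m}$ is a norm, which is exactly where assumption \eqref{eq:hypV} (equivalently, the surjectivity statement of Theorem \ref{abs:thm:normeq}) enters, and the bookkeeping of the indices when applying Lemma \ref{abs:suf-cst} to the dense sequence $(u_i)$.
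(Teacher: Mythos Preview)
Your proposal is correct and follows essentially the same approach as the paper: the key identity $\norm{v}{\disc_m}=\norm{v}{\wunp}$, coercivity via \eqref{eq:hypV}, limit-conformity by observing that $W_{\disc_m}\equiv 0$ thanks to \eqref{abs:stokes-formula}, consistency from the density of $(u_i)_{i\in\N}$, and compactness from the compact embedding are exactly the ingredients the paper uses, in the same order. The only cosmetic difference is that you route the consistency argument through Lemma \ref{abs:suf-cst} with $\wunpdense=\{u_i:i\in\N\}$, whereas the paper states it directly; both amount to the same one-line observation that $u_i\in X_{\disc_m}$ for $m\ge i$.
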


\begin{proof} By definition, for all $v \in X_{\disc_m}$, we have $\norm{v}{\disc_m} = \norm{v}{\wunp}$, which proves that $\norm{\cdot}{\disc_m}$ is a norm on $X_{\disc_m}$. The coercivity is then a consequence of Assumption \eqref{eq:hypV}. Relation \eqref{abs:stokes-formula} implies that $W_\disc$ defined by \eqref{abs:defwdisc} is identically null, which implies the limit-conformity property. The consistency is a consequence of the assumption that $(u_i)_{i\in\N}$ is a dense sequence in $\wunp$.
The compactness of the sequence is a straightforward consequence of the compact embedding  of $\wunp$ in $\lp$.
\end{proof}

\section{Approximation of an abstract Leray-Lions problem}\label{sec:absleraylions}

In this section, we generalise the problem presented in the introduction of this paper and provide
a convergence analysis of it based on the GDM.

Our general assumptions are similar to the assumptions considered in \cite{ler-65-res}:
\begin{subequations}
\begin{align}
&\begin{array}{l}
\mbox{$\bfa:\lp\times\lpd\to\lpd'$ is such that $\bfa(\cdot,\bv)$ is continuous for the strong topology of $\lpd'$,}\\
\mbox{$\bfa(v,\cdot)$ is continuous for the weak-$\star$ topology of $\lpd'$, and there exists $\overline{\alpha}\ge 0$ satisfying}\\
\forall v\in\lp,\ \forall \bv \in\lpd,  \norm{\bfa(v,\bv)}{\lpd'} \le \overline{\alpha}(1 + \norm{v}{\lp}^{p-1}+\norm{\bv}{\lpd}^{p-1}),
\end{array}
  \label{eq:defll}\\
&\begin{array}{l}
\mbox{$\bfa$ is monotonous in the sense:} \quad
\forall v\in\lp,\ \forall \bv,\bw \in\lpd,  \langle\bfa(v,\bv) - \bfa(v,\bw),\bv - \bw\rangle_{\lpd',\lpd} \ge 0,
\end{array}
 \label{eq:defmonot}\\
& 
\begin{array}{l}
\mbox{$\bfa$ is coercive in the sense: there exist $\underline{\alpha}>0$ such that}\\
\forall v\in\lp,\ \forall \bv\in\lpd, \ \underline{\alpha} \norm{\bv}{\lpd}^p \le \langle\bfa(v,\bv),\bv \rangle_{\lpd',\lpd}.
\end{array}
 \label{eq:defcoer}\\
&\begin{array}{l}
\mbox{$a:\lp\to \lppzero$ is continuous  for the weak-$\star$ topology of $\lp'$ and }
\forall v\in\lp,\   \norm{a(v)}{\lp'} \le \overline{\alpha}(1 + \norm{v}{\lp}^{p-1}),
\end{array}
  \label{eq:deflla}\\
&\begin{array}{l}
\mbox{$a$ is monotonous in the sense:} \quad
\forall v,w\in\lp,  \langle a(v) - a(w),v - w\rangle_{\lp',\lp} \ge 0,
\end{array}
 \label{eq:defmonota}\\
& 
\begin{array}{l}
\mbox{$a$ is ``$\lppzero$-coercive'' in the sense:}\quad
\forall v\in\lp, \ \underline{\alpha} \snorm{v}{\lp,\lppzero}^p \le \langle a(v),v \rangle_{\lp',\lp}.
\end{array}
 \label{eq:defcoera}
 \end{align}
\label{hypgnl}
\end{subequations}

The next two results ensure that for any separable reflexive smooth Banach spaces, there exist  operators $\bfa$ and $a$ with the required properties.
Let us recall the definition of a smooth Banach space.

\begin{definition}[Strictly convex and smooth Banach spaces]
\label{def:lindenstrauss} 
~

 	\begin{enumerate}
	  	\item A Banach space $(B,\norm{\cdot}{B})$ is said to be strictly convex if $\norm{\cdot}{B}$ is a strictly convex mapping from $B$ to $\R$,
		\item A Banach space $(B,\norm{\cdot}{B})$ is said to be smooth if, for any $x\in B$ with $\norm{x}{B} = 1$, there exists one and only one $f\in B'$ such that $f(x) = \norm{f}{B'}=1$,
  		\item If $(B,\norm{\cdot}{B})$ is smooth (resp. strictly convex), then $(B',\norm{\cdot}{B'})$ is strictly convex (resp. smooth).
 	\end{enumerate}
 \end{definition}

\begin{remark}[Equivalent strictly convex and smooth norms]
\label{rem:lindenstrauss} 
	Lindenstrauss proved in \cite{lindenstrauss} that any reflexive Banach space has an equivalent strictly convex and an equivalent smooth norm.
\end{remark}

\begin{lemma}[Existence of $\bfa$]\label{lem:plapgen}
 Assume that $\lpd$ is smooth %TODO what does that mean ?
 and define the duality mapping
 $\bfa:\lpd\to\lpd'$ by: for any $\bv\in \lpd$, $\bfa(\bv)\in\lpd'$ is the unique element such that
\be
\forall \bv\in\lpd,\ \langle\bfa(\bv),\bv\rangle_{\lpd',\lpd} = \norm{\bv}{\lpd}^p \hbox{ and }\norm{\bfa(\bv)}{\lpd'} =  \norm{\bv}{\lpd}^{p-1}.
\label{eq:defpsi}\ee
Then $\bfa$ satisfies Assumptions \eqref{eq:defll}--\eqref{eq:defcoer}.
\end{lemma}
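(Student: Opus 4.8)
The plan is to regard $\bfa$ as independent of its first argument, i.e.\ to read $\bfa(v,\bv)=\bfa(\bv)$ for the purposes of \eqref{eq:defll}--\eqref{eq:defcoer}. With this convention three of the required properties are immediate: the continuity of $\bfa(\cdot,\bv)$ is trivial since the map is constant in $v$; the growth bound in \eqref{eq:defll} holds as an equality with $\overline{\alpha}=1$ because $\norm{\bfa(\bv)}{\lpd'}=\norm{\bv}{\lpd}^{p-1}$; and the coercivity \eqref{eq:defcoer} holds with $\underline{\alpha}=1$ directly from $\langle\bfa(\bv),\bv\rangle_{\lpd',\lpd}=\norm{\bv}{\lpd}^p$. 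Before anything else I would justify that $\bfa$ is well defined. For $\bv=0$ set $\bfa(\bv)=0$; for $\bv\neq 0$, Hahn--Banach gives a norming functional $g\in\lpd'$ with $\norm{g}{\lpd'}=1$ and $\langle g,\bv\rangle_{\lpd',\lpd}=\norm{\bv}{\lpd}$, and then $\bfa(\bv):=\norm{\bv}{\lpd}^{p-1}g$ satisfies both identities in \eqref{eq:defpsi}. Uniqueness is precisely the smoothness hypothesis on $\lpd$: any admissible $\bfa(\bv)$, divided by $\norm{\bv}{\lpd}^{p-1}$, is a unit functional attaining the value $1$ at the unit vector $\bv/\norm{\bv}{\lpd}$, and smoothness says such a functional is unique.

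For the monotonicity \eqref{eq:defmonot}, I would expand $\langle\bfa(\bv)-\bfa(\bw),\bv-\bw\rangle_{\lpd',\lpd}$ and substitute the defining equalities together with the duality bounds $\langle\bfa(\bv),\bw\rangle_{\lpd',\lpd}\le\norm{\bv}{\lpd}^{p-1}\norm{\bw}{\lpd}$ and $\langle\bfa(\bw),\bv\rangle_{\lpd',\lpd}\le\norm{\bw}{\lpd}^{p-1}\norm{\bv}{\lpd}$. This lower bounds the expression by $(\norm{\bv}{\lpd}^{p-1}-\norm{\bw}{\lpd}^{p-1})(\norm{\bv}{\lpd}-\norm{\bw}{\lpd})$, which is nonnegative since $t\mapsto t^{p-1}$ is nondecreasing on $[0,\infty)$ for $p>1$.

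The only genuinely delicate point is the weak-$\star$ continuity of $\bfa(v,\cdot)$ required in \eqref{eq:defll}, that is, the demicontinuity of the duality map: I must show that $\bv_n\to\bv$ strongly in $\lpd$ implies $\bfa(\bv_n)\rightharpoonup\bfa(\bv)$ in $\lpd'$ (weak and weak-$\star$ coincide here because $\lpd$ is reflexive). The plan is a subsequence argument. The sequence $(\bfa(\bv_n))_n$ is bounded in $\lpd'$ since $\norm{\bfa(\bv_n)}{\lpd'}=\norm{\bv_n}{\lpd}^{p-1}$, so by reflexivity every subsequence admits a further subsequence converging weakly to some $\bm{\ell}\in\lpd'$. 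Splitting $\langle\bfa(\bv_{n_k}),\bv_{n_k}\rangle_{\lpd',\lpd}=\langle\bfa(\bv_{n_k}),\bv\rangle_{\lpd',\lpd}+\langle\bfa(\bv_{n_k}),\bv_{n_k}-\bv\rangle_{\lpd',\lpd}$ and passing to the limit (the last term vanishes by the uniform bound on $\norm{\bfa(\bv_{n_k})}{\lpd'}$ and $\bv_{n_k}\to\bv$) yields $\langle\bm{\ell},\bv\rangle_{\lpd',\lpd}=\norm{\bv}{\lpd}^p$. Combining this with weak lower semicontinuity of the norm, $\norm{\bm{\ell}}{\lpd'}\le\liminf_k\norm{\bv_{n_k}}{\lpd}^{p-1}=\norm{\bv}{\lpd}^{p-1}$, and with the reverse inequality forced (for $\bv\neq 0$) by $\norm{\bv}{\lpd}^p=\langle\bm{\ell},\bv\rangle_{\lpd',\lpd}\le\norm{\bm{\ell}}{\lpd'}\norm{\bv}{\lpd}$, I conclude that $\bm{\ell}$ satisfies \eqref{eq:defpsi}, hence $\bm{\ell}=\bfa(\bv)$ by the uniqueness established above. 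Since every subsequence thus has a further subsequence with the same weak limit $\bfa(\bv)$, the whole sequence converges weakly to $\bfa(\bv)$, which is the demicontinuity claimed; the case $\bv=0$ is trivial because then $\bfa(\bv_n)\to 0$ in norm. I expect this identification of the weak limit through the smoothness-driven uniqueness to be the crux of the proof, the subtlety being that strong convergence of $\bv_n$ does not in general yield strong convergence of $\bfa(\bv_n)$.
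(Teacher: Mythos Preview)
Your proof is correct and follows the same line as the paper's: existence via Hahn--Banach, uniqueness from smoothness (equivalently, strict convexity of $\lpd'$), the growth bound and coercivity read off directly from \eqref{eq:defpsi} with $\overline{\alpha}=\underline{\alpha}=1$, and the monotonicity is obtained by the identical expansion yielding $(\norm{\bv}{\lpd}^{p-1}-\norm{\bw}{\lpd}^{p-1})(\norm{\bv}{\lpd}-\norm{\bw}{\lpd})\ge 0$. The only difference is that the paper outsources the demicontinuity of the duality mapping to references (Beurling--Livingston, Browder), whereas you supply a clean self-contained argument via bounded subsequences, weak lower semicontinuity of the norm, and the uniqueness characterisation of $\bfa(\bv)$; this buys independence from the literature at no real cost, and is exactly the argument those references contain.
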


\begin{proof}

From \cite{beurling,browder1965,Browder2013}, the mapping $\bfa$ exists (it is the so-called ``duality mapping'' associated to the constant gauge function equal to $1$) and is continuous for the weak-$\star$ topology of $\lpd'$.
The existence of $\bfa(\bv)$ such that \eqref{eq:defpsi} holds is provided by the Hahn-Banach theorem, and the uniqueness is a consequence of the fact that the norm of $\lpd'$ is strictly convex. 

The boundedness mentioned in \eqref{eq:defll} is obvious (with $\overline{\alpha}=1$), as well as the coercivity \eqref{eq:defcoer} (with $\underline{\alpha}=1$). It remains to check the monotonicity of $\bfa$. 
By developing the duality product and using the definition of $\bfa$,
 \[
  \langle\bfa(\bv) - \bfa(\bw),\bv - \bw\rangle_{\lpd',\lpd} = \norm{\bv}{\lpd}^p + \norm{\bw}{\lpd}^p - \langle\bfa(\bv),\bw\rangle_{\lpd',\lpd} - \langle\bfa(\bw),\bv\rangle_{\lpd',\lpd}.
\]
Therefore
 \begin{align*}
  \langle\bfa(\bv) - \bfa(\bw),\bv - \bw\rangle_{\lpd',\lpd} \ge{}& \norm{\bv}{\lpd}^p + \norm{\bw}{\lpd}^p - \norm{\bv}{\lpd}^{p-1}\norm{\bw}{\lpd}- \norm{\bw}{\lpd}^{p-1}\norm{\bv}{\lpd}\\
  ={}& ( \norm{\bv}{\lpd}^{p-1} - \norm{\bw}{\lpd}^{p-1})(\norm{\bv}{\lpd}-\norm{\bw}{\lpd}) \ge 0,
 \end{align*}
since the function $s\to s^{p-1}$ is increasing on $\R^+$. 
\end{proof}

\begin{remark}
 In the case $\bv\in\lpd = L^p(\O)^d$, the operator $\bfa$ defined by \eqref{eq:defpsi}
is $\bfa(\bv) = |\bv|^{p-2}\bv$.
\end{remark}

\begin{lemma}[Existence of $a$]\label{lem:agen}
 Assume that $\lp$ is smooth. Define 
 \[
  \widetilde a(u):= {\rm argmax}\{ \langle \mu,u\rangle_{\lp',\lp}; \mu\in\lppzero\hbox{ such that }\norm{\mu}{\lp'} =  1\}.
 \]
Then $\langle\widetilde{a}(u),u\rangle_{\lp',\lp}=\snorm{u}{\lp,\lppzero}$ and
$a(u) = |\langle \widetilde a(u),u\rangle_{\lp',\lp}|^{p-1} \widetilde a(u)$ satisfies Hypotheses \eqref{eq:deflla}--\eqref{eq:defcoera}.
\end{lemma}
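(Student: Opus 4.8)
The plan is to mimic the proof of Lemma \ref{lem:plapgen}, viewing $\widetilde a$ as the duality-type mapping attached to the seminorm $\snorm{\cdot}{\lp,\lppzero}$, which is the support function of the unit ball of $\lppzero$. First I would settle the well-definedness of $\widetilde a$ together with the identity $\langle\widetilde a(u),u\rangle_{\lp',\lp}=\snorm{u}{\lp,\lppzero}$. For $u$ with $\snorm{u}{\lp,\lppzero}>0$ I maximise $\mu\mapsto\langle\mu,u\rangle_{\lp',\lp}$ over the closed unit ball of $\lppzero$: since $\lp$ is reflexive, so are $\lp'$ and its closed subspace $\lppzero$, hence this ball is weakly (equivalently weak-$\star$) compact and the weak-$\star$-continuous functional $\mu\mapsto\langle\mu,u\rangle_{\lp',\lp}$ attains its maximum, necessarily on the sphere $\norm{\mu}{\lp'}=1$ as the functional is nonzero. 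Because $\lppzero$ is a subspace, changing $\mu$ into $-\mu$ shows this maximum equals $\snorm{u}{\lp,\lppzero}$, so $\langle\widetilde a(u),u\rangle_{\lp',\lp}=\snorm{u}{\lp,\lppzero}\ge0$ and $\norm{\widetilde a(u)}{\lp'}=1$. Uniqueness of the maximiser follows from the smoothness of $\lp$: by Definition \ref{def:lindenstrauss} the dual $\lp'$, hence $\lppzero$, is strictly convex, so two distinct unit maximisers would have a midpoint of norm $<1$ still realising the value $\snorm{u}{\lp,\lppzero}$, and rescaling it to the sphere would exceed that value. When $\snorm{u}{\lp,\lppzero}=0$ (in particular if $\lppzero=\{0\}$) the maximiser need not be unique, but then $a(u)=\snorm{u}{\lp,\lppzero}^{p-1}\widetilde a(u)=0$, so $a$ is unambiguous everywhere.

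Next I would dispatch the algebraic parts of \eqref{eq:deflla}--\eqref{eq:defcoera} by direct computation, exactly paralleling Lemma \ref{lem:plapgen}. From $\langle\widetilde a(u),u\rangle_{\lp',\lp}=\snorm{u}{\lp,\lppzero}$ and $\norm{\widetilde a(u)}{\lp'}=1$ we get $a(u)\in\lppzero$ with $\norm{a(u)}{\lp'}=\snorm{u}{\lp,\lppzero}^{p-1}\le\norm{u}{\lp}^{p-1}$, which is the growth bound in \eqref{eq:deflla} with $\overline\alpha=1$; the coercivity \eqref{eq:defcoera} is immediate since $\langle a(u),u\rangle_{\lp',\lp}=\snorm{u}{\lp,\lppzero}^p$, so $\underline\alpha=1$ works. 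For the monotonicity \eqref{eq:defmonota} I expand $\langle a(v)-a(w),v-w\rangle_{\lp',\lp}$ and bound the cross terms via $|\langle\widetilde a(v),w\rangle_{\lp',\lp}|\le\norm{\widetilde a(v)}{\lp'}\snorm{w}{\lp,\lppzero}=\snorm{w}{\lp,\lppzero}$, reducing to $(\snorm{v}{\lp,\lppzero}^{p-1}-\snorm{w}{\lp,\lppzero}^{p-1})(\snorm{v}{\lp,\lppzero}-\snorm{w}{\lp,\lppzero})\ge0$, which holds since $s\mapsto s^{p-1}$ is nondecreasing on $\R^+$.

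The hard part will be the weak-$\star$ continuity of $a$ in \eqref{eq:deflla}. Since $\lp$ is separable, the weak-$\star$ topology of $\lp'$ is metrisable on bounded sets, so it suffices to prove sequential continuity: if $u_n\to u$ strongly in $\lp$ then $a(u_n)\rightharpoonup^\star a(u)$. As $\snorm{\cdot}{\lp,\lppzero}$ is Lipschitz, $\snorm{u_n}{\lp,\lppzero}\to\snorm{u}{\lp,\lppzero}$; when this limit is $0$, the bound $\norm{a(u_n)}{\lp'}=\snorm{u_n}{\lp,\lppzero}^{p-1}\to0$ gives strong, hence weak-$\star$, convergence to $a(u)=0$. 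When $\snorm{u}{\lp,\lppzero}>0$, the unit vectors $\widetilde a(u_n)$ are bounded in the reflexive (hence weak-$\star$ sequentially compact on bounded sets) space $\lp'$; any weak-$\star$ limit $\nu$ of a subsequence lies in the weakly closed set $\lppzero$ and satisfies $\langle\nu,u\rangle_{\lp',\lp}=\lim_k\langle\widetilde a(u_{n_k}),u_{n_k}\rangle_{\lp',\lp}=\snorm{u}{\lp,\lppzero}$ (combining weak-$\star$ convergence with the strong convergence of $u_{n_k}$) and $\norm{\nu}{\lp'}\le1$; then $\snorm{u}{\lp,\lppzero}=\langle\nu,u\rangle_{\lp',\lp}\le\norm{\nu}{\lp'}\snorm{u}{\lp,\lppzero}$ forces $\norm{\nu}{\lp'}=1$, so $\nu$ is a unit maximiser and $\nu=\widetilde a(u)$ by the uniqueness above. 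Every subsequence thus admits a further subsequence converging weak-$\star$ to the same limit, so $\widetilde a(u_n)\rightharpoonup^\star\widetilde a(u)$, and multiplying by the convergent scalars $\snorm{u_n}{\lp,\lppzero}^{p-1}$ yields $a(u_n)\rightharpoonup^\star a(u)$. This identification of weak-$\star$ limits, where the smoothness of $\lp$ (through uniqueness) is indispensable, is the only genuinely delicate step; everything else is parallel to Lemma \ref{lem:plapgen}.
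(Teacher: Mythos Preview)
Your proof is correct and follows essentially the same route as the paper, which simply says that the identity $\langle\widetilde a(u),u\rangle_{\lp',\lp}=\snorm{u}{\lp,\lppzero}$ is immediate from the definitions and that the remaining properties are proved ``similar to Lemma~\ref{lem:plapgen}''. You have faithfully carried out that program: existence/uniqueness of the maximiser via reflexivity and strict convexity of $\lp'$, the growth/coercivity/monotonicity computations parallel to Lemma~\ref{lem:plapgen} with $\snorm{\cdot}{\lp,\lppzero}$ playing the role of $\norm{\cdot}{\lpd}$, and the weak-$\star$ continuity by a compactness/identification-of-the-limit argument. The only place you go beyond the paper is in spelling out the continuity step directly (rather than invoking the duality-mapping references cited in Lemma~\ref{lem:plapgen}), which is appropriate here since $\widetilde a$ is not literally a duality mapping on a Banach space but on the pair $(\lp,\snorm{\cdot}{\lp,\lppzero})$; your argument handles this cleanly.
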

\begin{proof}
The relation $\langle\widetilde{a}(u),u\rangle_{\lp',\lp}=\snorm{u}{\lp,\lppzero}$ is an immediate consequence of the definition of $\widetilde{a}$ and $\snorm{\cdot}{\lp,\lppzero}$.
The proof that $a$ satisfies the required properties is similar to that of Lemma \ref{lem:plapgen}.
\end{proof}

\begin{remark}
If $V={\rm span}(\mu_1,\ldots,\mu_r)$, a possible choice of $a$ that satisfies 
\eqref{eq:deflla}--\eqref{eq:defcoera} is
\[
a(u)=\sum_{i=1}^r |\langle \mu_i,u\rangle_{\lp',\lp}|^{p-2}\langle \mu_i,u\rangle_{\lp',\lp}\ \mu_i.
\]
In the case $r=1$, this operator $a$ is the one defined in Lemma \ref{lem:agen}.
\end{remark}

For any $b\in (\wunp)'$ (the space of linear continuous forms for $\norm{\cdot}{\wunp,\mathcal G}$), the abstract Leray-Lions problem reads in its weak form
\be
\mbox{Find $\ubarre\in\wunp$  such that }
\forall v\in\wunp,\  \langle\bfa(\ubarre,\agrad \ubarre),\agrad v \rangle_{\lpd',\lpd} + \langle a(\ubarre),v \rangle_{\lp',\lp}   = \langle b,v \rangle_{(\wunp)',\wunp}.
\label{eq:weakwunpp}\ee

The following lemma will enable us to write a strong form for this problem.

\begin{lemma} If $b\in (\wunp)'$ then there exists $(f,\bF)\in \lp'\times\lpd'$ such that
 \[
  \forall v\in\wunp,\ \langle b,v\rangle_{(\wunp)',\wunp} =  \langle f,v \rangle_{\lp',\lp} -\langle \bF,\agrad v \rangle_{\lpd',\lpd}.
 \]
\end{lemma}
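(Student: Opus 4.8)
The plan is to realise $b$ as a functional on the graph $\mathcal{G}$ of $\agrad$, extend it by Hahn--Banach to the ambient product space $\lp\times\lpd$, and then read off $f$ and $\bF$ from the dual of that product.

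First I would observe that the linear map $v\mapsto(v,\agrad v)$ is, by the very definition of the graph norm $\norm{\cdot}{\wunp,\mathcal{G}}$, an isometry from $(\wunp,\norm{\cdot}{\wunp,\mathcal{G}})$ onto the graph $\mathcal{G}=\{(u,\agrad u):u\in\wunp\}$, which is a closed subspace of $\lp\times\lpd$, the latter being equipped with the product norm $(u,\bw)\mapsto(\norm{u}{\lp}^p+\norm{\bw}{\lpd}^p)^{1/p}$. Composing $b\in(\wunp)'$ with the inverse of this isometry therefore yields a continuous linear functional on $\mathcal{G}$.

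Next, by the Hahn--Banach theorem this functional extends to a continuous linear functional $B$ on the whole of $\lp\times\lpd$. Since $\lp$ and $\lpd$ are Banach spaces and the product carries the $\ell^p$-type norm above, its topological dual is $\lp'\times\lpd'$ acting through the pairing $(u,\bw)\mapsto\langle f,u\rangle_{\lp',\lp}+\langle\bG,\bw\rangle_{\lpd',\lpd}$; hence $B$ is represented by some pair $(f,\bG)\in\lp'\times\lpd'$.

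Finally I would restrict $B$ back to $\mathcal{G}$, that is, evaluate it at $(v,\agrad v)$ for $v\in\wunp$. Because $B$ extends the functional induced by $b$, this gives $\langle b,v\rangle_{(\wunp)',\wunp}=\langle f,v\rangle_{\lp',\lp}+\langle\bG,\agrad v\rangle_{\lpd',\lpd}$ for every $v\in\wunp$, and setting $\bF=-\bG$ produces the claimed identity. There is no genuine obstacle here: the only points requiring a little care are the identification of the dual of the product space and the verification that $v\mapsto(v,\agrad v)$ is isometric for the graph norm, both of which are routine.
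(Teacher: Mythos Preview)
Your proof is correct and follows essentially the same route as the paper: embed $\wunp$ into $\lp\times\lpd$ via $v\mapsto(v,\agrad v)$, transport $b$ to a bounded functional on the image, extend by Hahn--Banach to the full product, and represent the extension by a pair in $\lp'\times\lpd'$. The only cosmetic difference is that you phrase the first step as an isometry onto the graph, whereas the paper simply checks the continuity bound $|\widetilde b(I(v))|\le\norm{b}{(\wunp)'}\norm{v}{\wunp,\mathcal G}$; both amount to the same thing.
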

\begin{proof}
Let $I:\wunp\to \lp\times\lpd$ be the embedding $I(v)=(v,\agrad v)$.
Define $\widetilde{b}:{\rm Im}(I)\to \R$ by $\widetilde{b}(I(v))=\langle b,v\rangle_{(\wunp)',\wunp}$.
Then $\widetilde{b}$ is linear and
$|\widetilde b(I(v))| \le \norm{b}{(\wunp)'}\norm{v}{\wunp,\mathcal G}=\norm{b}{(\wunp)'} (\norm{v}{\lp} + \norm{\agrad v}{\lpd})$.
The Hahn-Banach extension theorem then enables us to extend $\widetilde{b}$ as a continuous
linear form on $\lp\times \lpd$. Any such form can be represented as
$\widetilde{b}(v,\bv)=\langle f,v\rangle_{\lp',\lp}-\langle\bF,\bv\rangle_{\lpd',\lpd}$ for
some $(f,\bF)\in \lp'\times\lpd'$, and the proof is complete by choice of $\widetilde{b}$ on ${\rm Im}(I)$.\end{proof}

Using $(f,\bF)$ provided by the preceding lemma, without loss of generality the problem \eqref{eq:weakwunpp} can be re-written as
\be
\begin{aligned}
&\mbox{Find $\ubarre\in\wunp$  such that, }\forall v\in\wunp,\\
&\langle\bfa(\ubarre,\agrad \ubarre),\agrad v \rangle_{\lpd',\lpd} + \langle a(\ubarre),v \rangle_{\lp',\lp}  = \langle f,v \rangle_{\lp',\lp} -\langle \bF,\agrad v \rangle_{\lpd',\lpd}.
\end{aligned}
\label{eq:weak}\ee
The strong form of Problem \eqref{eq:weak} reads: 
\be
\begin{aligned}
&\mbox{Find $\ubarre\in\wunp$  such that $\bfa(\ubarre,\agrad \ubarre) +\bF\in\hdiv$ and }
- \adiv \big(\bfa(\ubarre,\agrad \ubarre)+\bF\big) + a(\ubarre) = f.
 \label{eq:pbstrong} 
\end{aligned}
\ee
\begin{lemma} \label{lem:equivstrongweak}
Problems \eqref{eq:pbstrong}  and \eqref{eq:weak} are equivalent.
\end{lemma}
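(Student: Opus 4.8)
The plan is to show the two problems are equivalent by unwinding the definition of $\hdiv$ and its adjoint operator $\adiv$, using the generalised Stokes formula \eqref{abs:stokes-formula}. The equivalence hinges entirely on the characterisation \eqref{abs:defhdiv} of $\hdiv$ as the set of those $\bv\in\lpd'$ for which there exists $w\in\lp'$ realising the duality identity against every test function, together with the uniqueness of that $w$ (which is $\adiv\bv$) guaranteed by \eqref{wunp.prop}. So the heart of the matter is to recognise that the single scalar weak identity in \eqref{eq:weak} is precisely the assertion that $\bfa(\ubarre,\agrad\ubarre)+\bF$ lies in $\hdiv$, with a prescribed value for its divergence.

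\emph{From strong to weak.} First I would assume $\ubarre$ solves \eqref{eq:pbstrong}. Set $\bvarphi := \bfa(\ubarre,\agrad\ubarre)+\bF$, which by hypothesis belongs to $\hdiv$. Applying the Stokes formula \eqref{abs:stokes-formula} to $\bvarphi$ and an arbitrary $v\in\wunp$ gives
\[
\langle \bvarphi,\agrad v\rangle_{\lpd',\lpd} + \langle \adiv\bvarphi, v\rangle_{\lp',\lp} = 0.
\]
Now substitute the strong equation $\adiv\bvarphi = a(\ubarre) - f$ and expand $\bvarphi$; rearranging yields exactly \eqref{eq:weak}. This direction is essentially a direct computation.

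\emph{From weak to strong.} Conversely, assume $\ubarre\in\wunp$ solves \eqref{eq:weak}. Rewriting that identity, I would collect terms to obtain, for all $v\in\wunp$,
\[
\langle \bfa(\ubarre,\agrad\ubarre)+\bF,\agrad v\rangle_{\lpd',\lpd} + \langle a(\ubarre)-f, v\rangle_{\lp',\lp} = 0.
\]
Setting $\bv := \bfa(\ubarre,\agrad\ubarre)+\bF \in\lpd'$ (well-defined in $\lpd'$ by the growth bound \eqref{eq:defll} on $\bfa$) and $w := a(\ubarre)-f\in\lp'$, this is precisely the defining condition in \eqref{abs:defhdiv} for $\bv$ to belong to $\hdiv$. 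Hence $\bfa(\ubarre,\agrad\ubarre)+\bF\in\hdiv$, and by the uniqueness of the associated $w$ we get $\adiv\bv = w = a(\ubarre)-f$, i.e. $-\adiv(\bfa(\ubarre,\agrad\ubarre)+\bF) + a(\ubarre) = f$, which is \eqref{eq:pbstrong}.

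\emph{Main obstacle.} There is no genuine analytic difficulty here; the only point requiring care is the bookkeeping of which space each object lives in. Specifically, one must verify that $a(\ubarre)-f$ indeed belongs to $\lp'$ (so that it is a legitimate candidate for the $w$ in \eqref{abs:defhdiv}): this follows because $a$ maps into $\lppzero\subset\lp'$ by \eqref{eq:deflla} and $f\in\lp'$. The subtle step is thus recognising that the weak formulation, a priori only a family of scalar equations, upgrades $\bfa(\ubarre,\agrad\ubarre)+\bF$ from membership in $\lpd'$ to membership in the smaller space $\hdiv$, and that this upgrade is exactly what encodes the boundary condition in the abstract setting. Once the roles of $\bv$ and $w$ are correctly identified with the data of \eqref{abs:defhdiv}, the equivalence is immediate from the uniqueness statement following \eqref{wunp.prop}.
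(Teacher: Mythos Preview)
Your proof is correct and follows essentially the same approach as the paper: both directions use the Stokes formula \eqref{abs:stokes-formula} for the strong-to-weak implication and the defining property \eqref{abs:defhdiv} of $\hdiv$ (together with the uniqueness coming from \eqref{wunp.prop}) for the weak-to-strong implication. Your presentation is in fact slightly more detailed than the paper's, explicitly verifying that $a(\ubarre)-f\in\lp'$ so that it is an admissible candidate for the $w$ in \eqref{abs:defhdiv}.
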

\begin{proof} 
 Let $\ubarre\in\wunp$ be a solution to Problem \eqref{eq:pbstrong}. 
The equation in this formulation is a relation between elements of $\lp'$. 
Applying this equation to a generic $v\in\wunp$ and using \eqref{abs:stokes-formula} shows that $\ubarre$ is a solution to Problem \eqref{eq:weak}. 

Reciprocally, take $\ubarre\in\wunp$ a solution to Problem \eqref{eq:weak}. Then
the equation in \eqref{eq:weak} shows that, for all $v\in\wunp$,
\[
\langle\bfa(\ubarre,\agrad \ubarre)+\bF,\agrad v \rangle_{\lpd',\lpd} +  \langle a(\ubarre)-f,v \rangle_{\lp',\lp}.
\]
By definition \eqref{abs:defhdiv} of $\hdiv$, this shows that $\bfa(\ubarre,\agrad \ubarre)+\bF\in \hdiv$ and that
$\adiv(\bfa(\ubarre,\agrad \ubarre)+\bF) = a(\ubarre)-f$,
which is exactly \eqref{eq:pbstrong}. 
\end{proof}

% The following theorem ensures that
% \eqref{eq:pbstrong} and \eqref{eq:weak} have at least one solution. Note that
% this existence result is obtained by Galerkin approximations, as in 
% \cite{ler-65-res}.

\begin{remark}[Existence of a solution to \eqref{eq:weak}]
\label{rem:exist.sol} 
The fact that, under the framework of this section, there exists at least one solution to Problem \eqref{eq:weak}, is a by-product of the convergence theorem \ref{thm:convgradsch} below and of the existence result Lemma \ref{lem:existgd}. But it is also as  a consequence of \cite[Th\'eor\`eme 1]{ler-65-res}, in which the Banach space denoted by $V$, corresponds to $\wunp$ in this paper, and in which the operators denoted by $\bm{A}(u)$ and $A(u,v)$ are defined by the following.
\begin{itemize}
 \item If we assume that $\bfa$ only depends on its second argument, we define $\bm{A}~:~\wunp\to \wunp'$, by:
 \[
 \forall u,w\in \wunp,\ \langle \bm{A}(u),w\rangle_{\wunp,\wunp'} = \langle\bfa(\agrad u),\agrad w \rangle_{\lpd',\lpd} + \langle a(u),w \rangle_{\lp',\lp}.
 \]
 Then, owing to the monotony property of $\bfa$ and $a$, \cite[Hypoth\`ese I]{ler-65-res} is satisfied.
 \item In the case where $\bfa$ may also depend on its first argument, if we moreover assume that  the embedding of $\wunp$ in $\lp$ is compact, we define $A~:~\wunp\times\wunp\to \wunp'$, by:
 \[
 \forall u,v,w\in \wunp,\ \langle A(u,v),w\rangle_{\wunp,\wunp'} = \langle\bfa(u,\agrad v),\agrad w \rangle_{\lpd',\lpd} + \langle a(v),w \rangle_{\lp',\lp}.
 \]
 Then, owing to  Assumptions \ref{hypgnl}, \cite[Hypoth\`ese II]{ler-65-res} is satisfied.
\end{itemize}
This justifies the fact that we call Problem  \eqref{eq:weak} an abstract Leray-Lions problem.
\end{remark}

Given a gradient discretisation $\disc$, the gradient scheme (GS) for Problem \eqref{eq:weak} is: find $u\in X_\disc$ such that
\be
\forall v\in X_\disc,\  \langle\bfa(\api_\disc u, \agrad_\disc u),\agrad_\disc v \rangle_{\lpd',\lpd} + \langle a(\api_\disc u ),\api_\disc v \rangle_{\lp',\lp}   = \langle f,\api_\disc v \rangle_{\lp',\lp} -\langle \bF,\agrad_\disc v \rangle_{\lpd',\lpd}.
\label{gradsch_gen}\ee

\begin{theorem}[Convergence of the GS, abstract Leray--Lions pro\-blems]\label{thm:convgradsch}
Under Assumptions \eqref{hypgnl}, take a sequence $(\disc_m)_{m\in\N}$ of GDs in the sense of Definition \ref{def:graddisc}, which is consistent, limit-conforming and compact in the sense of Definitions \ref{abs:def-cons}, \ref{abs:def-limconf} and \ref{abs:def-comp}.

Then, for any  $m\in\N$, there exists at least one $u_m\in X_{\disc_m}$ solution to  the gradient scheme \eqref{gradsch_gen}. Moreover:
\begin{itemize}
 \item If we assume that $\bfa$ only depends on its second argument, then there exists $\ubarre$ solution of \eqref{eq:weak} such that, up to a subsequence, $\api_{\disc_m}  u_m$ converges weakly in $\lp$ to $\ubarre$ and  $\agrad_{\disc_m} u_m$ converges weakly in $\lpd$ to $\agrad \ubarre$ as $m\to\infty$.
 \item In the case where $\bfa$ may also depend on its first argument, if we moreover assume that the sequence $(\disc_m)_{m\in\N}$ of GDs is compact in the sense of Definition \ref{abs:def-comp} (this assumption implies that the embedding of $\wunp$ in $\lp$ is compact, see Lemma \ref{lem:compgd}), then there exists $\ubarre$ solution of \eqref{eq:weak} such that, up to a subsequence, $\api_{\disc_m}  u_m$ converges strongly in $\lp$ to $\ubarre$ and  $\agrad_{\disc_m} u_m$ converges weakly in $\lpd$ to $\agrad \ubarre$ as $m\to\infty$.
\end{itemize}

In the case where the solution $\ubarre$ of \eqref{eq:weak} is unique, then the above convergence results hold for the whole sequence.
\end{theorem}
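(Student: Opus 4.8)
The plan is to run the classical three-step convergence argument for monotone operators, adapted to the present gradient-discretisation setting: prove solvability of each discrete scheme by a finite-dimensional topological argument, extract weak limits from a uniform energy estimate, and identify those limits as a solution of \eqref{eq:weak} by a Minty--Browder argument.

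\textbf{Step 1 (discrete solvability and uniform a priori bound).} For a fixed $m$, \eqref{gradsch_gen} amounts to finding a zero of a continuous map on the finite-dimensional space $X_{\disc_m}$ (identified with its dual through $\norm{\cdot}{\disc_m}$). Testing \eqref{gradsch_gen} with $v=u$ and using the coercivities \eqref{eq:defcoer} and \eqref{eq:defcoera}, I get $\underline\alpha\norm{u}{\disc_m}^p\le \langle\bfa(\api_{\disc_m}u,\agrad_{\disc_m}u),\agrad_{\disc_m}u\rangle + \langle a(\api_{\disc_m}u),\api_{\disc_m}u\rangle$, whose right-hand side equals $\langle f,\api_{\disc_m}u\rangle - \langle\bF,\agrad_{\disc_m}u\rangle\le (C_{\disc_m}\norm{f}{\lp'}+\norm{\bF}{\lpd'})\norm{u}{\disc_m}$, using $\norm{\api_{\disc_m}u}{\lp}\le C_{\disc_m}\norm{u}{\disc_m}$ and $\norm{\agrad_{\disc_m}u}{\lpd}\le\norm{u}{\disc_m}$. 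Since $p>1$, this forces $\norm{u}{\disc_m}\le R$ with $R$ independent of $m$ (the sequence is coercive by Lemma \ref{abs:lcsuf-coer}, so $C_{\disc_m}\le C_P$). The same computation shows the map points ``outward'' on a large sphere, so a standard corollary of Brouwer's fixed-point theorem yields a solution $u_m$, automatically satisfying $\norm{u_m}{\disc_m}\le R$.

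\textbf{Step 2 (weak limits and limit equation).} As $(\norm{u_m}{\disc_m})_{\mnn}$ is bounded and the sequence is limit-conforming, Lemma \ref{abs:lem:reghunzero} gives $\ubarre\in\wunp$ with, up to a subsequence, $\api_{\disc_m}u_m\rightharpoonup\ubarre$ in $\lp$ and $\agrad_{\disc_m}u_m\rightharpoonup\agrad\ubarre$ in $\lpd$; the compactness assumption upgrades the first convergence to strong in $\lp$. The growth bounds \eqref{eq:defll} and \eqref{eq:deflla} show $\bfa(\api_{\disc_m}u_m,\agrad_{\disc_m}u_m)$ and $a(\api_{\disc_m}u_m)$ are bounded in $\lpd'$ and $\lp'$, so a further subsequence yields weak-$\star$ limits $\bchi\in\lpd'$ and $\zeta\in\lp'$. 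Fixing $\varphi\in\wunp$, consistency (Definition \ref{abs:def-cons}) provides $v_m\in X_{\disc_m}$ with $\api_{\disc_m}v_m\to\varphi$ and $\agrad_{\disc_m}v_m\to\agrad\varphi$ strongly; passing to the limit in \eqref{gradsch_gen} tested with $v_m$ (weak-$\star$ against strong) gives $\langle\bchi,\agrad\varphi\rangle_{\lpd',\lpd} + \langle\zeta,\varphi\rangle_{\lp',\lp} = \langle f,\varphi\rangle_{\lp',\lp} - \langle\bF,\agrad\varphi\rangle_{\lpd',\lpd}$ for every $\varphi\in\wunp$.

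\textbf{Step 3 (Minty--Browder identification --- the crux).} Testing \eqref{gradsch_gen} with $v=u_m$ and using the weak convergences on the right-hand side gives the energy identity $\lim_m\big[\langle\bfa(\api_{\disc_m}u_m,\agrad_{\disc_m}u_m),\agrad_{\disc_m}u_m\rangle + \langle a(\api_{\disc_m}u_m),\api_{\disc_m}u_m\rangle\big] = \langle f,\ubarre\rangle - \langle\bF,\agrad\ubarre\rangle = \langle\bchi,\agrad\ubarre\rangle + \langle\zeta,\ubarre\rangle$, the last equality being the limit equation at $\varphi=\ubarre$. Writing the monotonicity inequalities \eqref{eq:defmonot} (in the second argument, first argument $\api_{\disc_m}u_m$) and \eqref{eq:defmonota} against arbitrary $\bw\in\lpd$ and $w\in\lp$, expanding, and passing to the limit using this identity, I obtain
\[
\langle\bchi - \bfa(\ubarre,\bw),\agrad\ubarre-\bw\rangle_{\lpd',\lpd} + \langle\zeta - a(w),\ubarre-w\rangle_{\lp',\lp}\ge 0.
\]
Choosing $\bw=\agrad\ubarre - t\bvarphi$ and $w=\ubarre - t s$ with $t>0$, dividing by $t$, letting $t\to0$, and using the weak-$\star$ hemicontinuities in \eqref{eq:defll}, \eqref{eq:deflla}, then decoupling the independent directions $\bvarphi\in\lpd$ and $s\in\lp$ (each may take both signs), yields $\bchi=\bfa(\ubarre,\agrad\ubarre)$ and $\zeta=a(\ubarre)$; inserting these in the limit equation shows $\ubarre$ solves \eqref{eq:weak}. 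The main obstacle is precisely this identification: when $\bfa$ depends only on its second argument, the cross term $\langle\bfa(\bw),\agrad_{\disc_m}u_m-\bw\rangle$ passes to the limit by the weak convergence of $\agrad_{\disc_m}u_m$ alone, giving the first bullet; but when $\bfa$ also depends on its first argument, the limit of $\langle\bfa(\api_{\disc_m}u_m,\bw),\agrad_{\disc_m}u_m-\bw\rangle$ requires $\bfa(\api_{\disc_m}u_m,\bw)\to\bfa(\ubarre,\bw)$ strongly in $\lpd'$, which is exactly where the strong convergence of $\api_{\disc_m}u_m$ furnished by compactness becomes indispensable. Finally, when \eqref{eq:weak} has a unique solution, the usual ``every subsequence has a sub-subsequence converging to the same limit'' argument promotes the convergence to the whole sequence.
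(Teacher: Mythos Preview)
Your proof is correct and follows the same three-step strategy as the paper: finite-dimensional surjectivity for existence, a priori bound plus Lemma~\ref{abs:lem:reghunzero} for weak limits, and a Minty--Browder argument for identification. The only noteworthy difference is in Step~3: the paper performs the Minty trick with test elements $\vbarre\in\wunp$ (so $\bw=\agrad\vbarre$, $w=\vbarre$) and concludes only that $\langle\bchi,\agrad v\rangle+\langle\zeta,v\rangle=\langle\bfa(\ubarre,\agrad\ubarre),\agrad v\rangle+\langle a(\ubarre),v\rangle$ for all $v\in\wunp$, which suffices for \eqref{eq:weak}; you instead test against arbitrary $(\bw,w)\in\lpd\times\lp$ and obtain the sharper identifications $\bchi=\bfa(\ubarre,\agrad\ubarre)$ and $\zeta=a(\ubarre)$ separately. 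Your version is slightly stronger than needed but perfectly valid, and your explanation of why compactness (hence strong convergence of $\api_{\disc_m}u_m$) is required precisely when $\bfa$ depends on its first argument matches the paper's reasoning.
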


\begin{proof}

{\bf Step 1}: existence of a solution to the scheme.

\medskip

Let $\disc$ be a GD in the sense of Definition \ref{def:graddisc}.
We endow the finite dimensional space $X_{\disc}$ with an inner product $\langle~,~\rangle$
and we denote by $|\cdot|$ its related norm.
Define $F:X_{\disc}\to X_{\disc}$ as the function such that, if $u\in X_{\disc}$,
$F(u)$ is the unique element in $X_{\disc}$ which satisfies
\[
\forall v\in X_{\disc}\,,\quad \langle F(u),v\rangle = \langle\bfa(\api_\disc u, \agrad_\disc u),\agrad_\disc v \rangle_{\lpd',\lpd}  + \langle a(\api_\disc u),\api_\disc v \rangle_{\lp',\lp}.
\]
Likewise, we denote by $w\in X_{\disc}$ the unique element such that
\[
\forall v\in X_{\disc}\,,\quad \langle w,v\rangle = \langle f,\api_\disc v \rangle_{\lp',\lp}-\langle \bF,\agrad_\disc v \rangle_{\lpd',\lpd}.
\]
The assumptions on $\bfa$ and $a$ show that $F$ is continuous and that, for all $u\in X_{\disc}$,
$\langle F(u),u\rangle \ge \underline{\alpha}(\norm{\agrad_\disc u}{\lpd}^p+ \snorm{\api_\disc u}{\lp,\lppzero}^{p}) = \underline{\alpha}\norm{u}{\disc}^p$.
By equivalence of the norms on the final dimensional space $X_\disc$, this shows that $\langle F(u),u\rangle \ge \ctel{cst-equiv}|u|^p$
with $\cter{cst-equiv}$ not depending on $u$.
Hence $\lim_{|u|\to\infty} \frac{\langle F(u),u\rangle}{|u|}=+\infty$
and $F$ is surjective (see \cite{ler-65-res} or \cite[Theorem 3.3, page 19]{deimling}). 
There is therefore $u\in X_\disc$ such that $F(u)=w$, which means that $u$ is a solution
to \eqref{gradsch_gen}.

\medskip

{\bf Step 2}: convergence to a solution of the continuous problem.

\medskip

As in the statement of the theorem, assume that $u_m$ is a solution to \eqref{gradsch_gen} with
$\disc=\disc_m$.
Letting $v=u_m$ in \eqref{gradsch_gen} with $\disc=\disc_m$ and using \eqref{abs:defcoercivity}, \eqref{eq:defcoer} and \eqref{eq:defcoera}, we get
\[
\underline{\alpha} ( \norm{\agrad_{\disc_m} u_m}{\lpd}^{p} + \snorm{\api_{\disc_m} u_m}{\lp,\lppzero}^{p}) = \underline{\alpha}\norm{u_m}{\disc_m}^p\le (C_{\disc_m}  \Vert f\Vert_{\lp' } + \norm{\bF}{\lpd'})\norm{u_m}{\disc_m}.
\]
Thanks to the coercivity of the sequence of  GDs, this provides
an estimate on $\agrad_{\disc_m} u_m$ in $\lpd$ and on $\api_{\disc_m} u_m$ in $\lp$.
Lemma \ref{abs:lem:reghunzero} then gives $\ubarre\in \wunp$ such that, up to a
subsequence, $\api_{\disc_m}u_m\to \ubarre$ weakly in $\lp$ and
$\agrad_{\disc_m}u_m\to \agrad\ubarre$ weakly in $\lpd$.
In the case where $\bfa$ may depend on its first argument, by compactness of the sequence of GDs, we can
also assume that the convergence of $\api_{\disc_m}u_m$
to $\ubarre$ is strong in $\lp$. 

By Hypothesis \eqref{eq:defll}, the sequence $(\bfa(\api_{\disc_m} u_m,\agrad_{\disc_m} u_m))_{m\in\N}$ of elements of $\lpd'$ 
remains bounded in $\lpd'$
and converges therefore, up to a subsequence, to some $\bfA$ weakly in $\lpd'$, as $m\to\infty$. Similarly, by Hypothesis \eqref{eq:deflla}, the sequence $a(\api_{\disc_m} u_m)$ of elements of $\lp'$ 
remains bounded in $\lp'$
and converges therefore, up to a subsequence, to some $A$ weakly in $\lp'$, as $m\to\infty$.

Let us now show that $\ubarre$ is solution to \eqref{eq:weak}, using the well-known Minty trick \cite{min-63-mon}. 
For a given $\varphi\in \wunp$ and for any gradient discretisation  $\disc$ in the sequence $(\disc_m)_{m\in\N}$, we introduce
\[
I_{\disc} \varphi \in \argmin_{v\in X_{\disc}}\left(\Vert \api_\disc v - \varphi\Vert_{\lp} + \Vert \agrad_\disc v -
\agrad\varphi\Vert_{\lpd}\right)
\]
as a test function in \eqref{gradsch_gen}. By the consistency of $(\disc_m)_{m\in\N}$, 
$\api_{\disc_m}I_{\disc_m}\varphi\to \varphi$ in $\lp$ and $\agrad_{\disc_m}I_{\disc_m}\varphi\to
\agrad\varphi$ in $\lpd$, as $m\to\infty$. Hence, letting $m\to\infty$ in the gradient scheme,
\be
\langle\bfA,\agrad \varphi\rangle_{\lpd',\lpd} + \langle A,\varphi \rangle_{\lp',\lp}   = \langle f,\varphi \rangle_{\lp',\lp}-\langle \bF,\agrad\varphi \rangle_{\lpd',\lpd},
 \ \ \forall \varphi \in \wunp.
\label{lltestphi}\ee
On the other hand, we may let $m\to\infty$ in  \eqref{gradsch_gen} with $u_m$ as a test function.
Using \eqref{lltestphi} with $\varphi=\ubarre$, this leads to
\begin{multline}
\lim_{m\to\infty} \big( \langle\bfa(\api_{\disc_m} u_m, \agrad_{\disc_m} u_m),\agrad_{\disc_m} u_m\rangle_{\lpd',\lpd} + \langle a(\api_{\disc_m} u_m),\api_{\disc_m} u_m \rangle_{\lp',\lp} \big) \\
= \langle f,\ubarre\rangle_{\lp',\lp} -\langle \bF,\agrad\ubarre \rangle_{\lpd',\lpd} = 
\langle\bfA,\agrad \ubarre\rangle_{\lpd',\lpd} +  \langle a(\ubarre),\ubarre \rangle_{\lp',\lp}.
\label{astucemonotdeux}\end{multline}
Hypotheses \eqref{eq:defmonot} and \eqref{eq:defmonota} give, for any $\vbarre \in\wunp$,
\begin{multline}
\langle\bfa(\api_{\disc_m} u_m, \agrad_{\disc_m} u_m) - \bfa(\api_{\disc_m} u_m,\agrad \vbarre),\agrad_{\disc_m} u_m - \agrad\vbarre\rangle_{\lpd',\lpd}\\
+ \langle a(\api_{\disc_m} u_m) - a(\vbarre), \api_{\disc_m} u_m - \vbarre \rangle_{\lp',\lp}\ge 0.
\label{eq:defam}\end{multline}
Developing this, using \eqref{astucemonotdeux} to identify the limit of
the sole term 
$\langle\bfa(\api_{\disc_m} u_m, \agrad_{\disc_m} u_m),\agrad_{\disc_m} u_m\rangle_{\lpd',\lpd}
+\langle a(\api_{\disc_m} u_m),\api_{\disc_m} u_m \rangle_{\lp',\lp}$
involving a product
of two weak convergences and using the (strong) continuity of $\bfa$ with respect to its first argument (the second argument is $\agrad\vbarre$), we may let $m\to\infty$ to get
\[
\langle\bfA - \bfa(\ubarre,\agrad \vbarre),\agrad\ubarre - \agrad\vbarre\rangle_{\lpd',\lpd} + \langle A-a(\vbarre),\ubarre - \vbarre \rangle_{\lp',\lp}\ge 0.
\]
Set $\vbarre = \ubarre + s v$ in the preceding inequality, where $v\in\wunp$ and $s >0$. Dividing by $s$, we get
\[
\langle\bfA - \bfa(\ubarre,\agrad \ubarre + s\agrad v),\agrad v\rangle_{\lpd',\lpd} + \langle A-a(\ubarre + s v),v\rangle_{\lp',\lp}\ge 0.
\]
Letting $s\to 0$ and using the continuity of $\bfa(\ubarre,\cdot)$ for the weak topology of $\lpd'$ and the continuity of $a$ for the weak topology of $\lp'$ leads to
\[
\langle\bfA - \bfa(\ubarre,\agrad \ubarre),\agrad v\rangle_{\lpd',\lpd} +  \langle A-a(\ubarre),v\rangle_{\lp',\lp}\ge 0,\ \forall v\in\wunp.
\]
Changing $v$ into $-v$ shows that $\langle\bfA,\agrad v\rangle_{\lpd',\lpd}+  \langle A,v\rangle_{\lp',\lp}=\langle\bfa(\ubarre,\agrad \ubarre),\agrad v\rangle_{\lpd',\lpd}+  \langle a(\ubarre),v\rangle_{\lp',\lp}$. Using this relation in \eqref{lltestphi} with $\varphi =v$, this concludes the proof that  $\ubarre$ is a solution of \eqref{eq:weak}. 
\end{proof}

We now have the tools for the proof of Theorem \ref{abs:thm:normeq} which gives an necessary and condition for \eqref{eq:hypV} to hold.
\medskip

 {\bf Proof of Theorem \ref{abs:thm:normeq}}
Let us assume that \eqref{abs:suppl} holds.
Since $\norm{\cdot}{\wunp,\mathcal{G}}$ is a norm, proving its equivalence with $\norm{\cdot}{\wunp}$ establishes that this latter semi-norm is also a norm.

Half of the equivalence has already been established in \eqref{un.sens}.
To prove the other half, we just need to show that
\[
E=\left\{ u\in \wunp \,:\,\norm{u}{\wunp}=1\right\}
\] 
is bounded in $\lp $. Indeed, this establishes the existence of $M\ge 0$ such that, for
all $u\in E$, $\norm{u}{\lp,\lppzero}\le M$ and thus, since $\norm{\agrad u}{\lpd}\le \norm{u}{\wunp}= 1$,
\[
\norm{u}{\wunp,\mathcal{G}}\le (1+M^p)^{1/p}=(1+M^p)^{1/p}\norm{u}{\wunp}.
\]
By homogeneity of the semi-norms, this concludes the proof that $\norm{\cdot}{\wunp,\mathcal{G}}$ and
$\norm{\cdot}{\wunp}$ are equivalent on $\wunp$.

To prove that $E$ is bounded, take $f\in \lp'$ and apply \eqref{abs:suppl} to get $\bv_f\in\hdiv$ and $\mu_f\in \lppzero$ such that $f = \adiv\bv_f + \mu_f$. Then, for any $u\in E$, by definition of the semi-norm $\snorm{\cdot}{\lp,\lppzero}$
and since $\norm{\agrad u}{\lpd}\le 1$ and $\snorm{u}{\lp,\lppzero}\le 1$,
\begin{align*}
|\langle f,u\rangle_{\lp',\lp}| =  | \langle \adiv\bv_f,u\rangle_{\lp',\lp} + \langle \mu_f,u\rangle_{\lp',\lp}|={}& | - \langle \bv_f,\agrad u\rangle_{\lpd',\lpd} + \langle \mu_f,u\rangle_{\lp',\lp}|\\
\le{}&  \norm{\bv_f}{\lpd'} \norm{\agrad u}{\lpd} +  \norm{\mu_f}{\lp'} \snorm{u}{\lp,\lppzero}\le  \norm{\bv_f}{\lpd'} +  \norm{\mu_f}{\lp'}.
\end{align*}
This shows that
$\{\langle f,u\rangle_{\lp',\lp}\,:\,u\in E\}$ is bounded by some constant depending on $f$. Since this is valid for any $f\in \lp'$,
the Banach--Steinhaus theorem \cite[Theorem 2.2]{brezis} shows that $E$ is bounded in $\lp$.

\medskip

Reciprocally, let us assume that $ \norm{\cdot}{\wunp} $ and $\norm{\cdot}{\wunp,\mathcal{G}} $  are two equivalent norms, and let us prove Property \eqref{abs:suppl}. 
Thanks to \cite{lindenstrauss} (see Remark \ref{rem:lindenstrauss}), we can assume that $(\lp,\norm{\cdot}{\lp})$ and $(\lpd,\norm{\cdot}{\lpd})$ are smooth. 
Lemma \ref{lem:plapgen} can then be applied to define  $\bfa$ by \eqref{eq:defpsi}.
Let $a:\lp\to \lppzero\subset \lp'$ be defined as in Lemma \ref{lem:agen}.
Thanks to the remark\ref{rem:exist.sol}, for any $f\in \lp'$, there exists a solution $\bu$ to \eqref{eq:pbstrong} with $\bF=0$. 
Setting $\bv=-\bfa(\agrad \bu)$, we see that $f=\adiv\bv + a(\bu)\in {\rm Im}(\adiv)+\lppzero$ and the proof is complete.
 \eop

%%%%%%%%%%%%%%%%%%%%%%%%%%%%%%%%%%%%%%%%%%%%%%%%%%%%%%%%%%%%%%%%%%%%%

\section{Approximation of a linear elliptic problem}\label{sec:linell}

We consider here a particular case of Problem \eqref{eq:pbstrong}/\eqref{eq:weak}.
We take $p=2$ and make the following assumptions:
\begin{subequations}
\begin{align}
&
\begin{array}{l}
\mbox{$\bfa:\lpd\to\lpd'$ is linear continuous with norm bounded by $\overline{\alpha}$,}
\end{array}
  \label{eq:lindefll}\\
& 
\begin{array}{l}
\mbox{$\bfa$ is coercive in the sense: there exists $\underline{\alpha}>0$ such that}\\
\forall v\in\lp,\ \forall \bv\in\lpd, \ \underline{\alpha} \norm{\bv}{\lpd}^2 \le \langle\bfa(\bv),\bv \rangle_{\lpd',\lpd}.
\end{array}
 \label{eq:lindefcoer}\\
&\begin{array}{l}
\mbox{$a:\lp\to \lp'$ is linear and continuous  with norm bounded by $\overline{\alpha}$ },
\end{array}
  \label{eq:lindeflla}\\
& 
\begin{array}{l}
\mbox{$a$ is coercive in the sense:}\quad
\forall v\in\lp, \ \underline{\alpha} \snorm{v}{\lp,\lppzero}^2 \le \langle a(v),v \rangle_{\lp',\lp}.
\end{array}
 \label{eq:lindefcoera}
 \end{align}
\label{hypgl}
\end{subequations}

Then $\lpd$ is a Hilbert space when endowed with the scalar product 
\[
(\bv,\bw)\mapsto \half\left[\langle\bfa(\bv),\bw\rangle_{\lpd',\lpd}+\langle\bfa(\bw),\bv\rangle_{\lpd',\lpd}\right].
\]
Hypotheses \eqref{hypgl} imply
\be
\forall u\in\wunp,\ \underline{\alpha}\norm{u}{\wunp}^2 \le \langle\bfa(\agrad u),\agrad u\rangle_{\lpd',\lpd}+ \langle a(u),u \rangle_{\lp',\lp} \le \overline{\alpha}\norm{u}{\wunp}^2,
\label{eq:equivnormwunp}\ee
which shows that $\wunp$ is a Hilbert space when endowed with the scalar product 
\be
(u,v)\mapsto \langle u,v\rangle_{\wunp} : =\half\left[\langle\bfa(\agrad u),\agrad v\rangle_{\lpd',\lpd}+\langle\bfa(\agrad v),\agrad u\rangle_{\lpd',\lpd} + \langle a(u),v \rangle_{\lp',\lp}+\langle a(v),u \rangle_{\lp',\lp} \right].
\label{def:pscawunp}\ee

\begin{remark}If $\lp$ and $\lpd$ are Hilbert spaces, identifying $\lp$ with $\lp'$ and $\lpd$ with $\lpd'$, then $\snorm{u}{\lp,\lppzero} = \norm{P_\lppzero(u)}{\lp}$, denoting by $P_V$ the orthogonal projection on $\lppzero$. Then $\bfa$ and $a$ constructed
in Lemmas \ref{lem:plapgen} and \ref{lem:agen} with $p=2$ satisfy $\bfa = \rm{I}_d$ and $a = P_\lppzero$.
\end{remark}

For any  $(f,\bF)\in \lp'\times \lpd'$,  the abstract linear elliptic problem reads, in its weak form,
\be
\begin{aligned}
&\mbox{Find $\ubarre\in\wunp$  such that, }
\forall v\in\wunp,\  \langle\bfa(\agrad \ubarre),\agrad v \rangle_{\lpd',\lpd} + \langle a(\ubarre),v \rangle_{\lp',\lp}  = \langle f,v \rangle_{\lp',\lp} -\langle \bF,\agrad v \rangle_{\lpd',\lpd}
\end{aligned}
\label{eq:linweak}\ee
and, in its strong form,
\be
\begin{aligned}
&\mbox{Find $\ubarre\in\wunp$  such that $\bfa(\agrad \ubarre) +\bF\in\hdiv$ and }
- \adiv \big(\bfa(\agrad \ubarre)+\bF\big) + a(\ubarre) = f.
 \label{eq:linpbstrong} 
\end{aligned}
\ee
As proved by Lemma \ref{lem:equivstrongweak}, Problems \eqref{eq:linpbstrong}  and \eqref{eq:linweak} are equivalent.
The following theorem ensures that
\eqref{eq:linpbstrong} and \eqref{eq:linweak} have exactly one solution. 

\begin{theorem}[Existence  and uniqueness of a solution to \eqref{eq:linweak}]\label{th:exist.uniq.sol.lin}
Under Hypothesis \eqref{hypgl}, there exists one and only one solution to Problem \eqref{eq:linweak}.
\end{theorem}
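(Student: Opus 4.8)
The plan is to recast \eqref{eq:linweak} as a variational equation on the Hilbert space $\wunp$ and invoke the Lax--Milgram theorem. Introduce the bilinear form $B:\wunp\times\wunp\to\R$ defined by $B(u,v)=\langle\bfa(\agrad u),\agrad v\rangle_{\lpd',\lpd}+\langle a(u),v\rangle_{\lp',\lp}$ and the linear form $L(v)=\langle f,v\rangle_{\lp',\lp}-\langle\bF,\agrad v\rangle_{\lpd',\lpd}$, so that Problem \eqref{eq:linweak} reads exactly: find $\ubarre\in\wunp$ such that $B(\ubarre,v)=L(v)$ for all $v\in\wunp$.

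First I would record that $(\wunp,\norm{\cdot}{\wunp})$ is a Hilbert space. Indeed, under the standing assumption \eqref{eq:hypV} the norm $\norm{\cdot}{\wunp}$ is equivalent to the graph norm $\norm{\cdot}{\wunp,\mathcal{G}}$, for which $\wunp$ is complete (Section \ref{sec:cont.setting}); moreover \eqref{eq:equivnormwunp} together with \eqref{def:pscawunp} exhibits $\norm{\cdot}{\wunp}$ as equivalent to a Hilbertian norm. Hence completeness and the Hilbert structure are in hand.

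Next I would verify the three hypotheses of Lax--Milgram. Bilinearity of $B$ is immediate from the linearity of $\bfa$ and $a$ postulated in \eqref{eq:lindefll} and \eqref{eq:lindeflla}. Coercivity is nothing but the lower bound in \eqref{eq:equivnormwunp}, namely $B(u,u)\ge\underline{\alpha}\norm{u}{\wunp}^2$, which follows from \eqref{eq:lindefcoer} and \eqref{eq:lindefcoera} and the identity $\norm{u}{\wunp}^2=\snorm{u}{\lp,\lppzero}^2+\norm{\agrad u}{\lpd}^2$ (here $p=2$). For continuity, the operator-norm bounds on $\bfa$ and $a$ give $|B(u,v)|\le\overline{\alpha}\norm{\agrad u}{\lpd}\norm{\agrad v}{\lpd}+\overline{\alpha}\norm{u}{\lp}\norm{v}{\lp}$, and the $\lp$-norms are controlled through $\norm{u}{\lp}\le\norm{u}{\wunp,\mathcal{G}}\le C_{\wunp,\lppzero}\norm{u}{\wunp}$; the same estimate shows $|L(v)|\le(C_{\wunp,\lppzero}\norm{f}{\lp'}+\norm{\bF}{\lpd'})\norm{v}{\wunp}$, so $L\in(\wunp)'$. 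The Lax--Milgram theorem then yields a unique $\ubarre\in\wunp$ with $B(\ubarre,\cdot)=L$, which is the unique solution of \eqref{eq:linweak}; by Lemma \ref{lem:equivstrongweak} it is also the unique solution of \eqref{eq:linpbstrong}.

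There is no deep obstacle here, but two points deserve attention. The genuine subtlety is that $\norm{\cdot}{\wunp}$ only controls the semi-norm $\snorm{\cdot}{\lp,\lppzero}$ and not the full $\lp$-norm, so the continuity of both $B$ and $L$ truly relies on the norm-equivalence \eqref{eq:hypV} (equivalently, on the surjectivity statement of Theorem \ref{abs:thm:normeq}); without it the form $v\mapsto\langle f,v\rangle_{\lp',\lp}$ need not be continuous for $\norm{\cdot}{\wunp}$. Secondly, since $\bfa$ and $a$ are not assumed self-adjoint, $B$ need not be symmetric, so I would use the full (non-symmetric) Lax--Milgram theorem rather than the Riesz representation theorem.
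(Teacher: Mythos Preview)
Your proposal is correct and follows exactly the paper's approach: the paper's proof is a one-line invocation of the Lax--Milgram theorem on the Hilbert space $\wunp$ endowed with the inner product \eqref{def:pscawunp}. Your write-up simply unpacks the hypotheses (bilinearity, coercivity via \eqref{eq:equivnormwunp}, continuity via the norm equivalence \eqref{eq:hypV}) that the paper leaves implicit, and your remarks on the role of \eqref{eq:hypV} and the possible lack of symmetry of $B$ are accurate and pertinent.
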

\begin{proof}
This is an immediate consequence of Lax-Milgram theorem, on the Hilbert space $\wunp$ endowed with the inner product defined by \eqref{def:pscawunp}
\end{proof}

Table \ref{tab:lin} presents the links between this abstract linear elliptic setting and
the standard elliptic PDE, for all BCs proposed in the introduction of this paper.

\begin{table}[!h]
\begin{center}
\begin{tabular}{|c|c|c|c|c|}
 \hline
 B.C. & \begin{tabular}{c}homogeneous\\Dirichlet\end{tabular} & \begin{tabular}{c}homogeneous\\Neumann \end{tabular}& \begin{tabular}{c}non-homogeneous\\ Neumann \end{tabular} & Fourier \\
 \hline
 $\lp$ & $L^2(\O)$ &  $L^2(\O)$ & $L^2(\O)\times L^2(\dr\O)$ & $L^2(\O)\times L^2(\dr\O)$ \\
 \hline
 $\lpd$ & $L^2(\O)^d$ &  $L^2(\O)^d$ & $L^2(\O)^d$ & $L^2(\O)^d$ \\
 \hline
 $\bfa:$ & $\ba \bv\mapsto
 \Lambda \bv\ea$ &  $\ba \bv\mapsto
 \Lambda \bv\ea$ & $\ba \bv\mapsto
 \Lambda \bv\ea$  & $\ba \bv\mapsto
 \Lambda \bv\ea$ \\
 \hline
 $a:$ & $u\mapsto 0$ &  $\ba u\mapsto 
 \int_\O u 1_\Omega \ea$ & $\ba (u,w)\mapsto
 \int_\O u (1_\Omega,0)\ea$  & $\ba(u,w)\mapsto
 (0,b \tr u)\ea$ \\
 \hline
\end{tabular}
\caption{Link between the abstract linear elliptic problem and the usual
elliptic PDE $-\div(\Lambda\nabla\bu)=f+\div(\mathbf{F})$, for various various boundary conditions.}
\label{tab:lin}
\end{center}
\end{table}

Given a gradient discretisation $\disc$ in the sense of Definition \ref{def:graddisc},
we consider the following scheme for the approximation of Problem \eqref{eq:linweak}: find $u\in X_\disc$ such that
\be
\forall v\in X_\disc,\  \langle\bfa(\agrad_\disc u),\agrad_\disc v \rangle_{\lpd',\lpd} + \langle a(\api_\disc u ),\api_\disc v \rangle_{\lp',\lp}   = \langle f,\api_\disc v \rangle_{\lp',\lp} -\langle \bF,\agrad_\disc v \rangle_{\lpd',\lpd}.
\label{eq:lingradsch}\ee
Fixing a basis $(\basex^{(i)})_{i=1,\ldots,N}$ of $X_{\disc}$, the scheme \eqref{eq:lingradsch} is equivalent to solving the  linear square system $A U = B$, where
\begin{eqnarray}
&& u = \sum_{j=1}^N U_j \basex^{(j)},\nonumber\\
 && A_{ij} = \langle\bfa(\agrad_\disc \basex^{(j)}),\agrad_\disc \basex^{(i)} \rangle_{\lpd',\lpd} + \langle a(\api_\disc \basex^{(j)} ),\api_\disc \basex^{(i)} \rangle_{\lp',\lp},\label{eq:gradsch_lin_sl}\\
 && B_i = \langle f,\api_\disc \basex^{(i)}  \rangle_{\lp',\lp} -\langle \bF,\agrad_\disc \basex^{(i)}  \rangle_{\lpd',\lpd}.\nonumber
\end{eqnarray}

\begin{theorem}[Error estimates, abstract linear elliptic pro\-blem]\label{thm:convgradschlin}
Under Assumptions \eqref{hypgl}, let $\disc$ be a GD in the sense of Definition \ref{def:graddisc}.
Then there exists one and only one $u_\disc\in X_{\disc}$ solution to the GS \eqref{eq:lingradsch}. This solution satisfies the following inequalities:
\begin{align}
& \Vert \agrad \bu - \agrad_\disc u_\disc\Vert_{\lpd}
\le \frac 1 {\underline{\alpha}}\left[ W_\disc(\bfa(\agrad\bu)+\bm{F}) + (\overline{\alpha}(1+C_\disc) +\underline{\alpha})S_\disc(\bu)\right],
\label{errggradsch}\\
& \Vert \bu -\api_\disc  u_\disc\Vert_{\lp }
\le   \frac 1 {\underline{\alpha}} \left[C_\disc W_\disc(\bfa(\agrad\bu)+\bm{F}) + (C_\disc (1+C_\disc)\overline{\alpha} +\underline{\alpha}) S_\disc(\bu)\right],
\label{errugradsch}
\end{align}
where $C_\disc$, $S_\disc$ and $W_\disc$  are respectively the norm of the reconstruction operator $\api_\disc$, the consistency defect and the conformity defect, defined by  \eqref{abs:defcoercivity}, \eqref{abs:defsdisc} and \eqref{abs:defwdisc}.

Moreover, we also have the reverse inequalities
\begin{align}
&  W_\disc(\bfa(\agrad\bu)+\bm{F})\le \overline{\alpha}\Vert \agrad \bu - \agrad_\disc u_\disc\Vert_{\lpd},
\label{errggradschinv}\\
&  S_\disc(\bu) \le \Vert \bu -\api_\disc  u_\disc\Vert_{\lp } + \Vert \agrad \bu - \agrad_\disc u_\disc\Vert_{\lpd},
\label{errugradschinv}
\end{align}
which shows the existence of $\ctel{cte:eqinf}>0$ and $\ctel{cte:eqsup}>0$, only depending on $\overline{\alpha}$ and $\underline{\alpha}$, such that
\begin{multline}
\frac {\cter{cte:eqinf}}{1+C_\disc} \left[S_\disc(\bu) + W_\disc(\bfa(\agrad\bu)+\bm{F})\right]
\le \Vert \bu -\api_\disc  u_\disc\Vert_{\lp } +\Vert \agrad \bu - \agrad_\disc u_\disc\Vert_{\lpd} \\
\le \cter{cte:eqsup} (1+C_\disc)^2\left[S_\disc(\bu) + W_\disc(\bfa(\agrad\bu)+\bm{F})\right].
\label{equiverror}
\end{multline}
\end{theorem}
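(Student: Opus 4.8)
The plan is to reduce the three families of claims (existence/uniqueness, the two direct estimates, the two reverse estimates) to a single \emph{error identity}, the equivalence \eqref{equiverror} then being a purely formal combination of the four inequalities. As a preliminary, existence and uniqueness are elementary: since $\bfa$ and $a$ are linear, \eqref{eq:lingradsch} is the square linear system $AU=B$ of \eqref{eq:gradsch_lin_sl}, associated to the bilinear form $B_\disc(u,v)=\langle\bfa(\agrad_\disc u),\agrad_\disc v\rangle_{\lpd',\lpd}+\langle a(\api_\disc u),\api_\disc v\rangle_{\lp',\lp}$ on $X_\disc$. Testing with $v=u$ and using \eqref{eq:lindefcoer} and \eqref{eq:lindefcoera} gives $B_\disc(u,u)\ge\underline{\alpha}(\norm{\agrad_\disc u}{\lpd}^2+\snorm{\api_\disc u}{\lp,\lppzero}^2)=\underline{\alpha}\norm{u}{\disc}^2$. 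As $\norm{\cdot}{\disc}$ is a norm on the finite-dimensional space $X_\disc$, this forces $A$ to be injective, hence invertible, so there is exactly one $u_\disc$ (equivalently, apply Lax--Milgram on $X_\disc$).

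For the error identity, set $\bvarphi:=\bfa(\agrad\bu)+\bF$. The strong form \eqref{eq:linpbstrong} states precisely that $\bvarphi\in\hdiv$ and $\adiv\bvarphi=a(\bu)-f$. Plugging these two facts into the numerator of \eqref{abs:defwdisc} and eliminating the data term $\langle f,\api_\disc v\rangle_{\lp',\lp}-\langle\bF,\agrad_\disc v\rangle_{\lpd',\lpd}$ through the scheme \eqref{eq:lingradsch}, the linearity of $\bfa$ and $a$ gives, for every $v\in X_\disc$,
\[
\langle\bvarphi,\agrad_\disc v\rangle_{\lpd',\lpd}+\langle\adiv\bvarphi,\api_\disc v\rangle_{\lp',\lp}
=\langle\bfa(\agrad\bu-\agrad_\disc u_\disc),\agrad_\disc v\rangle_{\lpd',\lpd}
+\langle a(\bu-\api_\disc u_\disc),\api_\disc v\rangle_{\lp',\lp}.
\]
By \eqref{abs:defwdisc} the left-hand side is bounded in absolute value by $W_\disc(\bvarphi)\norm{v}{\disc}$; this single relation is the engine of the whole proof.

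To get the direct estimates, let $I_\disc\bu\in X_\disc$ realise the minimum in \eqref{abs:defsdisc}, so that $\norm{\api_\disc I_\disc\bu-\bu}{\lp}+\norm{\agrad_\disc I_\disc\bu-\agrad\bu}{\lpd}=S_\disc(\bu)$, and take $v=u_\disc-I_\disc\bu$ in the identity. Writing $\agrad\bu-\agrad_\disc u_\disc=(\agrad\bu-\agrad_\disc I_\disc\bu)-\agrad_\disc v$ and similarly for the $\api_\disc$ terms, the diagonal contributions $\langle\bfa(\agrad_\disc v),\agrad_\disc v\rangle+\langle a(\api_\disc v),\api_\disc v\rangle$ split off and are bounded below by $\underline{\alpha}\norm{v}{\disc}^2$ through \eqref{eq:lindefcoer}--\eqref{eq:lindefcoera}. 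Bounding the remaining cross terms by the continuity norm $\overline{\alpha}$ of \eqref{eq:lindefll} and \eqref{eq:lindeflla}, and using $\norm{\agrad_\disc v}{\lpd}\le\norm{v}{\disc}$ together with $\norm{\api_\disc v}{\lp}\le C_\disc\norm{v}{\disc}$ from \eqref{abs:defcoercivity}, yields (after dividing by $\norm{v}{\disc}$; the case $v=0$ is trivial)
\[
\underline{\alpha}\norm{v}{\disc}\le W_\disc(\bvarphi)+\overline{\alpha}(1+C_\disc)S_\disc(\bu).
\]
Estimates \eqref{errggradsch} and \eqref{errugradsch} then follow by the triangle inequality through $I_\disc\bu$, namely $\norm{\agrad\bu-\agrad_\disc u_\disc}{\lpd}\le S_\disc(\bu)+\norm{\agrad_\disc v}{\lpd}\le S_\disc(\bu)+\norm{v}{\disc}$ and $\norm{\bu-\api_\disc u_\disc}{\lp}\le S_\disc(\bu)+C_\disc\norm{v}{\disc}$.

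Finally, the reverse estimates and the equivalence. Inequality \eqref{errugradschinv} is immediate: taking $v=u_\disc$ in the minimisation \eqref{abs:defsdisc} gives $S_\disc(\bu)\le\norm{\api_\disc u_\disc-\bu}{\lp}+\norm{\agrad_\disc u_\disc-\agrad\bu}{\lpd}$. For \eqref{errggradschinv} one returns to the error identity, bounds its right-hand side by the continuity of $\bfa$ (norm $\le\overline{\alpha}$), divides by $\norm{v}{\disc}\ge\norm{\agrad_\disc v}{\lpd}$, and takes the supremum over $v$. The equivalence \eqref{equiverror} is then formal: its upper bound is the sum of \eqref{errggradsch} and \eqref{errugradsch}, whose $W_\disc$- and $S_\disc$-coefficients are each bounded by a constant multiple of $(1+C_\disc)^2$ depending only on $\overline{\alpha},\underline{\alpha}$, while its lower bound follows by feeding \eqref{errggradschinv} and \eqref{errugradschinv} into $S_\disc(\bu)+W_\disc(\bvarphi)$, the harmless factor $1/(1+C_\disc)$ absorbing the resulting constants. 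I expect the \textbf{main obstacle} to be precisely \eqref{errggradschinv}: extracting a bound in terms of the \emph{gradient} error alone from the error identity requires controlling the $a$-contribution $\langle a(\bu-\api_\disc u_\disc),\api_\disc v\rangle$, which must be reabsorbed by exploiting that $a$ takes its values in $\lppzero$ so that this term is governed by the seminorm $\snorm{\cdot}{\lp,\lppzero}$ rather than the full $\lp$-norm; by comparison the direct estimates and \eqref{errugradschinv} are routine coercivity-plus-interpolation arguments, and the only secondary chore is the bookkeeping of the $C_\disc$-dependent constants needed to land exactly on the form \eqref{equiverror}.
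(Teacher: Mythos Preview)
Your argument is essentially the paper's: the same error identity, the same interpolant $I_\disc\bu$, the same coercivity/continuity splitting leading to $\underline{\alpha}\norm{I_\disc\bu-u_\disc}{\disc}\le W_\disc(\bvarphi)+\overline{\alpha}(1+C_\disc)S_\disc(\bu)$, and the same triangle inequalities to reach \eqref{errggradsch}--\eqref{errugradsch}. The only cosmetic difference is that you establish existence/uniqueness directly via coercivity (Lax--Milgram on $X_\disc$), whereas the paper first proves the estimates and then deduces uniqueness from the case $f=0$, $\bF=0$; both are fine.

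On \eqref{errggradschinv}, which you rightly single out as the delicate point: your proposed fix via $a(\lp)\subset\lppzero$ does not actually eliminate the function-error term. If $a(\bu-\api_\disc u_\disc)\in\lppzero$, then indeed $|\langle a(\bu-\api_\disc u_\disc),\api_\disc v\rangle|\le\overline{\alpha}\norm{\bu-\api_\disc u_\disc}{\lp}\snorm{\api_\disc v}{\lp,\lppzero}\le\overline{\alpha}\norm{\bu-\api_\disc u_\disc}{\lp}\norm{v}{\disc}$, but after dividing by $\norm{v}{\disc}$ you are still left with $\overline{\alpha}\norm{\bu-\api_\disc u_\disc}{\lp}$, not a gradient-only bound. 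In fact the paper's own proof of \eqref{errggradschinv} lands on
\[
W_\disc(\bfa(\agrad\bu)+\bF)\le\overline{\alpha}\big(\norm{\agrad\bu-\agrad_\disc u_\disc}{\lpd}+C_\disc\norm{\bu-\api_\disc u_\disc}{\lp}\big),
\]
so the statement \eqref{errggradschinv} as written appears to be missing the second term. This is harmless for the purpose of the theorem: the weaker inequality (either the paper's version with $C_\disc$ or yours without) is exactly what is needed, together with \eqref{errugradschinv}, to obtain the lower bound in \eqref{equiverror} with constants depending only on $\overline{\alpha},\underline{\alpha}$ and the displayed $(1+C_\disc)^{-1}$ factor. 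So your plan is correct; just do not expect to recover \eqref{errggradschinv} in the literal form stated.
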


\begin{proof}

Let us first prove that, if \eqref{errggradsch}--\eqref{errugradsch} holds for any solution $u_\disc\in X_{\disc}$ to Scheme \eqref{eq:lingradsch}, then the solution to this scheme exists and is unique.
For that, we prove that if  \eqref{errggradsch} holds then the matrix denoted by $A$ of the linear system \eqref{eq:gradsch_lin_sl} is non-singular. 
This will be completed if we prove $A U = 0$ implies $U=0$. Thus, we consider the particular case where $f=0$ and $\bm{F}=0$
which gives a zero right-hand side. In this case the solution $\bu$ of \eqref{eq:linweak} is equal to zero.
Then from \eqref{errggradsch}--\eqref{errugradsch}, any solution to the scheme satisfies $\norm{ u_\disc }{\disc}=0$.
Since $\norm{ \cdot }{\disc}$ is a norm on $X_{\disc}$ this leads to $u_\disc=0$.
Therefore \eqref{eq:gradsch_lin_sl} (as well as \eqref{eq:lingradsch}) has a unique solution for any right-hand side $f$ and~$\bm{F}$.

Let us now prove that any solution $u_\disc\in X_{\disc}$ to Scheme \eqref{eq:lingradsch} satisfies \eqref{errggradsch} and \eqref{errugradsch}.
Notice that $\bm{\varphi} = \bfa(\agrad\bu) +\bm{F} \in \hdiv$ and can thus be considered in the definition \eqref{abs:defwdisc} of $W_\disc$. This gives, for any $v\in X_{\disc}$,
\[
\left\vert\langle \bfa(\agrad\bu) +\bm{F},\agrad_\disc v\rangle_{\lpd',\lpd} + \langle\adiv(\bfa(\agrad\bu)+\bm{F}),\api_\disc v\rangle_{\lp',\lp} \right\vert \le\Vert  v\Vert_{\disc}\  W_\disc(\bfa(\agrad\bu)+\bm{F}).
\]
Since $-f + a(\ubarre) =  \adiv( \bfa(\agrad\bu) +\bm{F})$, this yields
\be
\left\vert \langle \bfa(\agrad\bu) +\bm{F},\agrad_\disc v\rangle_{\lpd',\lpd} + \langle-f + a(\ubarre),\api_\disc v\rangle_{\lp',\lp} \right\vert \le\Vert  v\Vert_{\disc}\ W_\disc(\bfa(\agrad\bu)+\bm{F}).
\label{for.non.homo.0}
\ee
Using the gradient scheme \eqref{eq:lingradsch} to replace the terms involving $f$ and $\mathbf{F}$ in the left-hand side, we infer
\be
\label{for.non.homo}
\left\vert\langle \bfa(\agrad\bu - \agrad_\disc u_\disc),\agrad_\disc v\rangle_{\lpd',\lpd} + \langle a(\ubarre-\api_\disc  u_\disc ),\api_\disc v\rangle_{\lp',\lp}\right\vert\le\Vert  v\Vert_{\disc}\ W_\disc(\bfa(\agrad\bu)+\bm{F}).
\ee
Define $\Idisc \bu = \argmin_{w\in X_{\disc}}(\Vert \api_\disc w -\bu\Vert_{\lp } + \Vert \agrad_\disc w - \agrad\bu\Vert_{\lpd})$
and notice that, by definition \eqref{abs:defsdisc} of $S_\disc$,
\be\label{for.non.homo2}
\norm{\api_\disc \Idisc\bu-\bu}{\lp }+\norm{\agrad_\disc \Idisc\bu-\agrad\bu}{\lpd}
= S_\disc(\bu).
\ee
Recalling the definition
of $\norm{\cdot}{\disc}$ in Definition \ref{def:graddisc}, introducing
$\agrad\bu$ and $\api\bu$ and using \eqref{for.non.homo}
gives
\begin{align*}
\langle \bfa(\agrad_\disc \Idisc \bu &- \agrad_\disc u_\disc),\agrad_\disc v\rangle_{\lpd',\lpd} + \langle a(\api_\disc \Idisc \bu-\api_\disc  u_\disc ),\api_\disc v\rangle_{\lp',\lp}\\
\le{}& \norm{ v}{\disc}\ W_\disc(\bfa(\agrad\bu)+\bm{F})
+ \left| \langle \bfa(\agrad_\disc \Idisc \bu - \agrad\bu),\agrad_\disc v\rangle_{\lpd',\lpd} + \langle a(\api_\disc \Idisc \bu-\bu ),\api_\disc v\rangle_{\lp',\lp}\right|\\
\le{}&  \norm{ v}{\disc}\left[ W_\disc(\bfa(\agrad\bu)+\bm{F}) + \overline{\alpha}(\norm{\agrad_\disc \Idisc \bu - \agrad  \bu}{\lpd}+C_\disc\norm{\api_\disc \Idisc \bu -  \bu}{\lp})\right] \\
\le{}& \norm{ v}{\disc}\left[ W_\disc(\bfa(\agrad\bu)+\bm{F}) + \overline{\alpha}(1+C_\disc)S_\disc( \bu)\right].\end{align*}
Choose $v = \Idisc \bu - u_\disc$ and apply Hypothesis \eqref{hypgl}:
\be\label{for.est.PiD}
\underline{\alpha} \norm{\Idisc \bu - u_\disc}{\disc}
\le  W_\disc(\bfa(\agrad\bu)+\bm{F}) + \overline{\alpha}(1+C_\disc)S_\disc( \bu).
\ee
Estimate \eqref{errggradsch} follows by using the triangle inequality:
\begin{align}
\Vert \agrad\bu-{}&\agrad_\disc u_\disc\Vert_{\lpd} \le \norm{\agrad\bu-\agrad_\disc \Idisc\bu}{\lpd}
+\norm{\agrad_\disc(\Idisc\bu-u_\disc)}{\lpd}\nonumber\\
\le{}& \norm{\agrad\bu-\agrad_\disc \Idisc\bu}{\lpd}
+\norm{\Idisc\bu-u_\disc}{\disc}
\le S_\disc(\bu)+\frac{1}{\underline{\alpha}}\left(W_\disc(\bfa(\agrad\bu)+\bm{F}) + \overline{\alpha}(1+C_\disc)S_\disc( \bu)\right).
\label{for.est.PiD2}
\end{align}
Using \eqref{abs:defcoercivity} and \eqref{for.est.PiD}, we get 
\be\label{for.est.PiD3}
\underline{\alpha} \norm{\api_\disc \Idisc  \bu -\api_\disc  u_\disc}{\lp }
\le  C_\disc( W_\disc(\bfa(\agrad\bu)+\bm{F}) + \overline{\alpha}(1+C_\disc)S_\disc(  \bu)),
\ee
which yields  \eqref{errugradsch} by invoking, as in \eqref{for.est.PiD2}, the triangle inequality and the estimate
$\Vert  \bu - \api_\disc \Idisc \bu \Vert_{\lpd}  \le S_\disc  (\bu)$.

\medskip

Let us now turn to the proof of \eqref{errggradschinv}. The gradient scheme \eqref{eq:lingradsch}
gives, for any $v\in X_{\disc}\setminus\{0\}$,
\begin{multline*} 
\langle f - a(\bu),\api_\disc v \rangle_{\lp',\lp} -\langle \bfa(\agrad\bu)+\bF,\agrad_\disc v \rangle_{\lpd',\lpd} = \langle\bfa(\agrad_\disc u - \agrad\bu),\agrad_\disc v \rangle_{\lpd',\lpd} + \langle a(\api_\disc u -\bu),\api_\disc v \rangle_{\lp',\lp}
\end{multline*}
and thus
\[
\frac{\left\vert \langle f - a(\bu),\api_\disc v \rangle_{\lp',\lp} -\langle \bfa(\agrad\bu)+\bF,\agrad_\disc v \rangle_{\lpd',\lpd}\right\vert}{\norm{ v}{\disc} } 
 \le \overline{\alpha}(\norm{\agrad_\disc u -\agrad\bu}{\lpd} + C_\disc\norm{\api_\disc u -\bu}{\lp}).
\]
Taking the supremum over $v$ on the left hand side yields  \eqref{errggradschinv} since \eqref{eq:linpbstrong} holds. Inequality \eqref{errugradschinv} is an immediate consequence of the definition of $S_\disc(\bu)$.
\end{proof}

 \medskip

\begin{remark}[On the compactness assumption]\label{rem:nocomp}
Note that, in the linear case, the compactness of the sequence of GDs is not required to obtain the  convergence. This compactness assumption is in general only needed for some non-linear problems.
\end{remark}

\begin{remark}[consistency and limit-conformity are necessary conditions]
We state here a kind of reciprocal property to the convergence property. Let us assume that,  under Hypothesis  \eqref{hypgl}, a sequence $(\disc_m)_{m\in\N}$ of GDs is such that,   for all $f\in \lp $ and $\bm{F}\in \lpd$ and for all $m\in\N$, there exists $u_m\in X_{\disc_m}$ which is solution to the gradient scheme \eqref{eq:lingradsch} and such that $\api_{\disc_m}  u_m$ (resp. $\agrad_{\disc_m}  u_m$) converges in $\lp $ to the solution $\ubarre$ of \eqref{eq:linweak} (resp.  in $\lpd$ to $\agrad\ubarre$). Then $(\disc_m)_{m\in\N}$ is consistent and limit-conforming in the sense of Definitions \ref{abs:def-cons} and \ref{abs:def-limconf}.

Indeed, for $\varphi\in \wunp $, let us consider $f =  a(\varphi)$ and $\mathbf{F}=-\bfa(\agrad \varphi)$ in \eqref{eq:linweak}. Since in this case, $\ubarre = \varphi$, the assumption that  $\api_{\disc_m}  u_m$ (resp. $\agrad_{\disc_m}  u_m$) converges in $\lp $ to the solution $\varphi$ of \eqref{eq:linweak} (resp.  converges in $\lpd$ to $\agrad\varphi$), inequality \eqref{errugradschinv} proves that $S_{\disc_m}(\varphi)$ tends to 0 as $m\to\infty$, and therefore the sequence  $(\disc_m)_{m\in\N}$ is consistent.

For $\bvarphi \in \hdiv$, let us set $f = \adiv\bvarphi$ and $\bm{F}= -\bvarphi$ in \eqref{eq:linweak}. In this case, the
solution $\ubarre$ is equal to $0$, since the right-hand side  of \eqref{eq:linweak}  vanishes for any $v\in \wunp $.
Then inequality \eqref{errggradschinv} implies
\[
 W_{\disc_m}(\bvarphi) \le \overline{\alpha}  \Vert \agrad_{\disc_m}  u_m\Vert_{\lpd} \to 0 \hbox{ as }m\to 0,
\]
hence concluding that the sequence $(\disc_m)_{m\in\N}$  is limit-conforming.

Note that, if we now assume that $\agrad_{\disc_m} u_m$ converges only weakly (instead of strongly) in $\lpd$ to $\agrad\ubarre$, the same conclusion holds. 
Indeed, the other hypotheses on $(\disc_m)_\mnn$ are sufficient to prove that  $\agrad_{\disc_m}  u_m$ actually converges strongly in $\lpd$ to $\agrad\ubarre$.
It suffices to observe that
\[
  \lim_{m\to\infty}  (\langle f,\api_{\disc_m} u_m \rangle_{\lp',\lp} -\langle \bF,\agrad_{\disc_m} u_m\rangle_{\lpd',\lpd}) = 
  \langle f,\ubarre\rangle_{\lp',\lp} -\langle \bF,\agrad\ubarre\rangle_{\lpd',\lpd}.
\]
Then we take $v= \ubarre$ in \eqref{eq:linweak} and $v = u_m$ in \eqref{eq:lingradsch}, this leads to
  \begin{multline*}
   \lim_{m\to\infty}   (\langle\bfa(\agrad_{\disc_m} u_m),\agrad_{\disc_m} u_m \rangle_{\lpd',\lpd} + \langle a(\api_{\disc_m} u_m ),\api_{\disc_m} u_m \rangle_{\lp',\lp}) \\= 
   \langle f,\ubarre\rangle_{\lp',\lp} -\langle \bF,\agrad\ubarre\rangle_{\lpd',\lpd} = 
   \langle\bfa(\agrad\ubarre),\agrad\ubarre \rangle_{\lpd',\lpd} + \langle a(\ubarre),\ubarre\rangle_{\lp',\lp}.
  \end{multline*}
  In addition to the assumed weak convergence property of $\agrad_{\disc_m}  u_m$, this proves
  \[
   \lim_{m\to\infty} \langle\bfa(\agrad_{\disc_m} u_m-\agrad\ubarre),\agrad_{\disc_m} u_m -\agrad\ubarre\rangle_{\lpd',\lpd}  = 0,
  \] 
  and the convergence of $\agrad_{\disc_m} u_m$ to $\agrad\ubarre$ in $\lpd$ follows from the coercivity of $\bfa$ assumed in \eqref{hypgl}.

 \label{rem:reciprocv}
\end{remark}

%%%%%%%%%%%%%%%%%%%%%%%%%%%%%%%%%%%%%%%%%%%%%%%%%%%%%%%%%%%%%%%%%%%%%

\section{Other applications of the unified discretisation setting}\label{sec:otherapp}

We briefly present here other PDE models that can be analysed using the unified setting presented
in this paper.

\subsection{A hybrid-dimensional problem}

We consider a simplified model for a Darcy flow in a convex domain $\Omega\subset\mathbb{R}^3$, in which a fracture $\Gamma$ splits the domain $\Omega$ into two subdomains, $\Omega_1$ and $\Omega_2$. This fracture is defined by $\Gamma= \Omega\cap P$, where $P$ is a plane.  We assume that $\bm{n}_{12}$ is the unit vector normal to $\Gamma$, oriented from $\Omega_1$ to $\Omega_2$. For this problem, the continuous model reads
\begin{eqnarray}
\label{modeleCont}
\left\{\begin{array}{r@{\,\,}c@{\,\,}lll}
-\div(\Lambda \nabla u) &=& r &\mbox{ in } \Omega_i, \; i=1,2,&\\
u &=& 0 &\mbox{ on } \partial \Omega,&\\
-\div_\Gamma(\Lambda_\Gamma\nabla_\Gamma u)  + (\Lambda \nabla u_{|\Omega_1} -  \Lambda \nabla u_{|\Omega_2})\cdot \bm{n}_{12} &=& r_\Gamma &\mbox{ on } \Gamma,
\end{array}\right.
\end{eqnarray}
where $\nabla_\Gamma$ (resp. $\div_\Gamma$) is the $2D$ gradient (resp. divergence) along $\Gamma$, $r\in L^2(\O)$, $r_\Gamma\in L^2(\Gamma)$.

Defining the space
$$
H = \{ v \in H^1_0(\Omega) \,|\, \gamma_\Gamma v \in H^1(\Gamma)\},
$$ 
the weak formulation of Problem \eqref{modeleCont} is given by: find $\bar u\in V$ such that 
\begin{eqnarray}
\label{VarForm}
\forall v\in H,\ 
\dsp\int_\Omega \Lambda  \nabla \bar u \cdot \nabla v \d\x 
+ \int_{\Gamma}  \Lambda_\Gamma \nabla_\Gamma \gamma_\Gamma \bar u \cdot \nabla_\Gamma \gamma_\Gamma v \dfrontiere
= 
\dsp\int_\Omega r v  \d\x +  \int_{\Gamma} r_\Gamma\gamma_\Gamma v\dfrontiere. 
\end{eqnarray} 
This weak formulation is then identical to \eqref{eq:linweak} by letting:
\begin{itemize}
 \item $\lp = L^2(\O)\times L^2(\Gamma)$, $\lpd = L^2(\O)^3\times L^2(\Gamma)^2$,
 \item $\wunp = \{ (v, \gamma_\Gamma v), v\in H\}$ and $\agrad (v, \gamma_\Gamma v) = (\nabla v, \nabla_\Gamma \gamma_\Gamma v)$,
 \item $V = \{0\}$, $\bfa(\bv, \bw) = (\Lambda \bv, \Lambda_\Gamma \bw)$, $f = (r,r_\Gamma)$, $\bF = 0$.
\end{itemize}
Then, in this very simple case of fracture, the abstract Gradient Discretisation Method defined here applied to this problem is identical to that of \cite{brenner2016gradient}. It is expected that the general case of fractured domain studied in \cite{brenner2016gradient} could enter into this framework as well; this however will not avoid the tricky proof of density results done in \cite{brenner2016gradient}.

\subsection{Linear elasticity in solid continuum mechanics}

Consider now the following spaces:
\begin{itemize}
 \item $\O\subset\R^3$,
 \item $\lp = L^2(\O)^3$, so that $\lp'=L^{2}(\O)^3=\lp$.
 \item $\lpd = \lp ^{3\times 3}$, so that $\lpd'=\lp ^{3\times 3}$.
 \item $\hdiv = H_{\rm div}(\O)^3$, and $\lppzero =\{0\}$.
 \item $\wunp = H^{1}_0(\O)^3$.
\end{itemize}

The operators $\agrad:H^{1}_0(\O)^3\to \lp ^{3\times 3}$ and $\adiv:H_{\rm div}(\O)^3\to \lp ^3$ are defined, for $u\in H^{1}_0(\O)^3$ (the ``displacement field'') by
\[
 (\agrad u)_{i,j} = \frac 1 2 (\partial_i u^{(j)} + \partial_j u^{(i)}), 
\]
and, for $\sigma\in  H_{\rm div}(\O)^3$ (the ``stress field'')
\[
 (\adiv \sigma)_i = \sum_{j=1}^3 \partial_j \sigma^{(i,j)}.
\]
Then, the construction in Section \ref{sec:linell} handles the case of the linear elasticity theory in solid continuum mechanics. 
Indeed, a strong formulation of the equilibrium of a solid under internal forces is Problem \eqref{eq:linpbstrong}, the linear operator $\bfa$ expresses Hooke's law ($\bfa(\agrad u)_{i,j} = \lambda \sum_{k=1}^3 (\agrad u)_{k,k}\delta_{i,j} + 2\mu (\agrad u)_{i,j} $ with $\delta_{i,j}=1$ if $i=j$ and $0$ otherwise, and $\lambda\ge 0, \mu>0$ are given) and \eqref{eq:linweak} is the so-called ``virtual displacement'' formulation, that is the weak formulation of  \eqref{eq:linpbstrong}.

\subsection{Riemannian geometry}

Let $(M,g)$ be a compact orientable Riemannian manifold of dimension $d$ without boundary, and corresponding measure $\mu_g$. 
We denote by $TM=\cup_{x\in M}(\{x\}\times T_xM)$ the tangent bundle to $M$, and define the operators and spaces
\begin{itemize}
 \item $\lp = L^2(M)$, so that $\lp'=L^{2}(M)=\lp$,
 \item $\lpd = L^2(TM):=\{\bv\,:\,\bv(x)\in T_xM\,,\forall x\in M\mbox{ and }x\mapsto g_x(\bv(x),\bv(x))^{1/2} \in L^2(M)\}$; we have $\lpd'=\lpd$,
 \item $\agrad:C^1(M)\to L^2(TM)$ the standard gradient, that is $\agrad u=\nabla_g u$ such that, for any smooth vector field $X$ and any $x\in M$, $\nabla_g u(x)\in T_xM$ and $g_x(X(x),\nabla_g u(x))=du_x(X(x))$. 
 \item $\wunp$ is the closure in $L^2(M)$ of $C^1(M)$ for the norm
\[
u\mapsto \left(\int_M |u(x)|^2\,d\mu_g(x) + \int_M g_x(\nabla_g u(x),\nabla_g u(x))\,d\mu_g(x) \right)^{1/2}.
\]
Then $\agrad$ is naturally extended, by density, to $\wunp$.
\end{itemize}

Then, following the construction in Section \ref{sec:cont.setting}, $\adiv$ is the standard divergence $\div_g$ on $M$ and $\hdiv=\{\bv\in L^2(TM)\,:\, \div_g\bv\in L^2(M)\}$.
We can then take $V=\mathrm{span}\{1\}$ and see that \eqref{eq:hypV} holds by the Poincar\'e--Wirtinger inequality in $\wunp$ (this inequality follows as in bounded open sets of $\R^d$ by using the compact embedding $\wunp\hookrightarrow L^2(M)$).

In the setting described by \eqref{hypgl}, Problem \eqref{eq:linpbstrong} contains as a particular case the Poisson equation $-\Delta_g \bu=f$ on $M$ (with selection of the unique solution  having zero average on the manifold), obtained by letting $\bfa(\nabla_g u) =\nabla_g u$ and $a(u) = \int_M u(x)\,d\mu_g(x)$. 
In its generic form, \eqref{eq:weakwunpp} is an extension of the Leray--Lions equations to $M$.

\bibliographystyle{abbrv}
\bibliography{gdm_abst}

\begin{thebibliography}{10}

\bibitem{andreianov4}
B.~Andreianov, F.~Boyer, and F.~Hubert.
\newblock Besov regularity and new error estimates for finite volume
  approximations of the {$p$}-{L}aplacian.
\newblock {\em Numer. Math.}, 100(4):565--592, 2005.

\bibitem{andreianov3}
B.~Andreianov, F.~Boyer, and F.~Hubert.
\newblock On the finite-volume approximation of regular solutions of the
  {$p$}-{L}aplacian.
\newblock {\em IMA J. Numer. Anal.}, 26(3):472--502, 2006.

\bibitem{andreianov1}
B.~Andreianov, F.~Boyer, and F.~Hubert.
\newblock Discrete {B}esov framework for finite volume approximation of the
  {$p$}-{L}aplacian on non-uniform {C}artesian grids.
\newblock In {\em Paris-{S}ud {W}orking {G}roup on {M}odelling and {S}cientific
  {C}omputing 2006--2007}, volume~18 of {\em ESAIM Proc.}, pages 1--10. EDP
  Sci., Les Ulis, 2007.

\bibitem{andreianov2}
B.~Andreianov, F.~Boyer, and F.~Hubert.
\newblock Discrete duality finite volume schemes for {L}eray-{L}ions-type
  elliptic problems on general 2{D} meshes.
\newblock {\em Numer. Methods Partial Differential Equations}, 23(1):145--195,
  2007.

\bibitem{ant-15-mim}
P.~F. Antonietti, N.~Bigoni, and M.~Verani.
\newblock Mimetic finite difference approximation of quasilinear elliptic
  problems.
\newblock {\em Calcolo}, 52(1):45--67, 2015.

\bibitem{barrett7}
J.~W. Barrett and W.~B. Liu.
\newblock Finite element approximation of the {$p$}-{L}aplacian.
\newblock {\em Math. Comp.}, 61(204):523--537, 1993.

\bibitem{barrett4}
J.~W. Barrett and W.~B. Liu.
\newblock Finite element approximation of the parabolic {$p$}-{L}aplacian.
\newblock {\em SIAM J. Numer. Anal.}, 31(2):413--428, 1994.

\bibitem{barrett8}
J.~W. Barrett and W.~B. Liu.
\newblock Quasi-norm error bounds for the finite element approximation of a
  non-{N}ewtonian flow.
\newblock {\em Numer. Math.}, 68(4):437--456, 1994.

\bibitem{beurling}
A.~Beurling and A.~Livingston.
\newblock A theorem on duality mappings in {B}anach spaces.
\newblock {\em Arkiv Mat.}, 4:405--411, 1962.

\bibitem{brenner2016gradient}
K.~Brenner, M.~Groza, C.~Guichard, G.~Lebeau, and R.~Masson.
\newblock Gradient discretization of hybrid dimensional {D}arcy flows in
  fractured porous media.
\newblock {\em Numer. Math.}, 134(3):569--609, 2016.

\bibitem{brezis}
H.~Brezis.
\newblock {\em Functional analysis, {S}obolev spaces and partial differential
  equations}.
\newblock Universitext. Springer, New York, 2011.

\bibitem{browder1965}
F.~E. Browder.
\newblock On a theorem of {B}eurling and {L}ivingston.
\newblock {\em Canadian Jour. Math.}, 17:367-- 372, 1965.

\bibitem{Browder2013}
F.~E. Browder and D.~G. De~Figueiredo.
\newblock {\em {J}-Monotone Nonlinear Operators in {B}anach Spaces}, pages
  1--9.
\newblock Springer International Publishing, Cham, 2013.

\bibitem{bur-08-dis}
E.~Burman and A.~Ern.
\newblock Discontinuous {G}alerkin approximation with discrete variational
  principle for the nonlinear {L}aplacian.
\newblock {\em C. R. Math. Acad. Sci. Paris}, 346(17-18):1013--1016, 2008.

\bibitem{deimling}
K.~Deimling.
\newblock {\em Nonlinear functional analysis}.
\newblock Springer-Verlag, Berlin, 1985.

\bibitem{dip-17-hyb}
D.~A. Di~Pietro and J.~Droniou.
\newblock A hybrid high-order method for {L}eray-{L}ions elliptic equations on
  general meshes.
\newblock {\em Math. Comp.}, 86(307):2159--2191, 2017.

\bibitem{dro-06-ll}
J.~Droniou.
\newblock Finite volume schemes for fully non-linear elliptic equations in
  divergence form.
\newblock {\em ESAIM: Mathematical Modelling and Numerical Analysis},
  40(6):1069, 2006.

\bibitem{koala}
J.~Droniou, R.~Eymard, T.~Gallou\"et, C.~Guichard, and R.~Herbin.
\newblock {\em The gradient discretisation method: A framework for the
  discretisation and numerical analysis of linear and nonlinear elliptic and
  parabolic problems}.
\newblock 2017.
\newblock Submitted. Version 4.

\bibitem{popo}
R.~Eymard, T.~Gallou\"et, and R.~Herbin.
\newblock Cell centred discretisation of non linear elliptic problems on
  general multidimensional polyhedral grids.
\newblock {\em J. Numer. Math.}, 17(3):173--193, 2009.

\bibitem{eym-17-dis}
R.~Eymard and C.~Guichard.
\newblock Discontinuous galerkin gradient discretisations for the approximation
  of second-order differential operators in divergence form.
\newblock {\em Computational and Applied Mathematics}, 17, 2017.

\bibitem{glo-03-app}
R.~Glowinski and J.~Rappaz.
\newblock Approximation of a nonlinear elliptic problem arising in a
  non-{N}ewtonian fluid flow model in glaciology.
\newblock {\em M2AN Math. Model. Numer. Anal.}, 37(1):175--186, 2003.

\bibitem{Kato1995}
T.~Kato.
\newblock {\em Introduction to the theory of operators in Banach spaces}, pages
  126--188.
\newblock Springer Berlin Heidelberg, Berlin, Heidelberg, 1995.

\bibitem{ler-65-res}
J.~{L}eray and J.~{L}ions.
\newblock Quelques r{\'e}sultats de {V}i\v sik sur les probl{\`e}mes
  elliptiques non lin{\'e}aires par les m{\'e}thodes de {M}inty-{B}rowder.
\newblock {\em Bull. Soc. Math. France}, 93:97--107, 1965.

\bibitem{lindenstrauss}
J.~Lindenstrauss.
\newblock On nonseparable reflexive banach spaces.
\newblock {\em Bull. Amer. Math. Soc.}, 72(6):967--970, 1966.

\bibitem{barrett6}
W.~B. Liu and J.~W. Barrett.
\newblock A further remark on the regularity of the solutions of the
  {$p$}-{L}aplacian and its applications to their finite element approximation.
\newblock {\em Nonlinear Anal.}, 21(5):379--387, 1993.

\bibitem{barrett5}
W.~B. Liu and J.~W. Barrett.
\newblock A remark on the regularity of the solutions of the {$p$}-{L}aplacian
  and its application to their finite element approximation.
\newblock {\em J. Math. Anal. Appl.}, 178(2):470--487, 1993.

\bibitem{min-63-mon}
G.~Minty.
\newblock On a monotonicity method for the solution of non-linear equations in
  {B}anach spaces.
\newblock {\em Proceedings of the National Academy of Sciences of the United
  States of America}, 50(6):1038, 1963.

\end{thebibliography}

\end{document}
j